\newtheorem{theorem}{Theorem}[section]
\newtheorem{lemma}[theorem]{Lemma}
\newtheorem{proposition}[theorem]{Proposition}
\newtheorem{corollary}[theorem]{Corollary}
\theoremstyle{definition}
\newtheorem{definition}[theorem]{Definition}
\newtheorem{remark}[theorem]{Remark}
\newtheorem{example}[theorem]{Example}
\numberwithin{equation}{section}
\newcommand{\dist}{\mathrm{dist}}      
\newcommand{\Span}{\mathrm{span}}       
\renewcommand{\Im}{{\ensuremath{\mathrm{Im\,}}}} 
\renewcommand{\Re}{{\ensuremath{\mathrm{Re\,}}}} 
\providecommand{\norm}[1]{\lVert#1\rVert} 
\providecommand{\abs}[1]{\lvert#1\rvert} 
\DeclareMathOperator{\Ran}{Ran} 
\DeclareMathOperator{\Rank}{Rank}
\DeclareMathOperator{\Ker}{Ker}
\newcommand{\R}{\mathbb{R}}
\newcommand{\C}{\mathbb{C}}
\newcommand{\1}{\mathbb{I}}
 \DeclareMathOperator{\Real}{Re}
\newcommand\DM[1]{{\color{red} DM: #1}}
\title[Laplacians with point interactions]{Laplacians with point interactions -- expected and unexpected spectral properties}
\subjclass[2010]{47D06, 34B45}
\keywords{Non-self-adjoint operators, quantum graphs, positive operator semigroups}
\thanks{The second author would like to thank Jochen Glück (Ulm) for helpful discussions.\\
D.M. was partially supported by the Deutsche Forschungsgemeinschaft (Grant 397230547).}
\author{Amru Hussein} 
\address{Amru Hussein, Department of Mathematics,
TU Darmstadt, Schlossgartenstr. 7, 64289 Darmstadt, Germany}
\email{hussein@mathematik.tu-darmstadt.de}
\author{Delio Mugnolo}
\address{Delio Mugnolo, Chair of Analysis, Faculty of Mathematics and Computer Science, FernUniversit\"at Hagen, Germany}
\email{delio.mugnolo@fernuni-hagen.de}
\begin{document}

\begin{abstract}
We study the one-dimensional Laplace operator with point interactions on the real line identified with two copies of the half-line $[0,\infty)$. All possible boundary conditions that define generators of $C_0$-semigroups on  $L^2\big([0,\infty)\big)\oplus L^2\big([0,\infty)\big)$ are characterized. Here, the Cayley transform of the boundary conditions plays an important role, and using an explicit representation of the Green's functions, it allows us to study invariance properties of semigroups.
\end{abstract}

\maketitle

\section{Introduction}\label{sec:intro}
Here, point interactions for the Laplacian on the real line are considered. The real line is realized here as two half-lines $[0,\infty)\dot{\cup} [0,\infty)$ 
 coupled at the two boundary points. More concretely, we consider realizations $-\Delta(A,B)$ of 
\begin{align*}
-\frac{d^2}{dx} \quad \hbox{in} \quad L^2\big([0,\infty);\C\big)\oplus L^2\big([0,\infty);\C\big)
\end{align*}
with boundary conditions of the form
\begin{align}\label{eq:bc-ab}
A \begin{bmatrix}
\psi_1(0) \\ \psi_2(0)
\end{bmatrix} 
+ B \begin{bmatrix}
\psi'_1(0) \\ \psi'_2(0)
\end{bmatrix} = 0
\quad \hbox{for} \quad A,B\in \C^{2\times 2},
\end{align}
where one has $\psi=(\psi_1,\psi_2)^T\in L^2\big([0,\infty);\C\big)\oplus L^2\big([0,\infty);\C\big)$. Like in~\cite[\S~4]{Mug10}, we regard this setting as a toy model of more complicated quantum graphs.

There are many studies on self-adjoint boundary conditions, cf. \cite{BeKu_book} and references therein, boundary conditions leading to so-called spectral operators, cf. \cite{DSIII}, or boundary conditions related to quadratic forms, cf. \cite{Mug_book}. However, a study of all possible boundary conditions of this form seems to be lacking so far. In this note, we turn to classical Hille--Yosida theory and address the issue of semigroup generation by realizations of the Laplacian with point interactions of the above type.  It turns out that resolvent estimates for $\Delta(A,B)$ are closely related to the behavior of the Cayley transform.  

One could naively expect that imposing two linearly independent boundary conditions is both necessary and sufficient to induce a realization that generates a semigroup, because there are two boundary points and this leads to the rank condition
\begin{align*}
\Rank (A\ B) =2;
\end{align*}
and in fact if $\Rank (A\ B) \neq 2$, then $\sigma(-\Delta(A,B))= \C$, see \cite[Prop. 4.2]{HKS2015}.
However, this rank condition is not yet sufficient to establish basic spectral properties and it turns out that the question of determining when $A,B$ induce a semigroup generator is not  trivial. In a previous work Krej\v{c}i\v{r}\'{i}k, Siegl and the first author, see \cite{HKS2015}, pointed out the importance of the Cayley transform 
\begin{align}\label{eq:Cayley}
\mathfrak{S}(k;A,B):= -(A+ikB)^{-1}(A-ik B), \quad k\in \C,
\end{align}
for basic spectral properties. The condition that $A+ikB$ is invertible for some $k\in\C$ has been used in~\cite{HKS2015} as definition for the notion of \textit{regular boundary conditions}: on general metric graphs irregular boundary conditions can produce very wild spectral features, ranging from empty spectrum -- as in the situation considered here -- to empty resolvent set. 
For the case of one boundary point this cannot occur: the easiest non-trivial case features two boundary points and will be investigated  in detail in the following. 

In the present setting we find out that realizations $\Delta(A,B)$ with irregular boundary conditions have empty resolvent set and thus fail to be generators of $C_0$-semigroups;  surprisingly, it turns out that there are even some regular boundary conditions that do not define generators of $C_0$-semigroups. We will see that not only the mere existence of the Cayley transform is relevant, but also its asymptotic behavior.
The crucial point is that the Cayley transform $\mathfrak{S}(k;A,B)$ appears in a natural way in an explicit formula for the resolvent of $\Delta(A,B)$, which in turn easily allows us to check the conditions of the Hille--Yosida Theorem in its version for analytic semigroups.

Once generation is assessed, we turn to the issue of qualitative properties of the semigroup generated by $\Delta(A,B)$, again in dependence of $A,B$.
It is well-known that relevant features of a semigroup -- in particular, whether it is positive and/or $L^\infty$-contractive -- is tightly related to analogous invariance properties of its generator's resolvent. Using again our machinery, we are then able to formulate sufficient conditions for invariance in terms of properties of $\mathfrak{S}(k;A,B)$. 
In the context of general metric graphs, positivity and Markovian features of semigroups in dependence  of the boundary conditions have been studied already in~\cite[\S~5]{KS2006} -- however only for self-adjoint boundary conditions~\eqref{eq:bc-ab} and giving only sufficient conditions -- and in~\cite[\S~5--6]{CarMug09} for the case of only m-sectorial boundary conditions for which a complete characterisation is obtained, see also \cite{Mug07,Mug10,KanKlaVoi09} for related results. The notion of m-sectorial boundary conditions is explained in Section~\ref{sec:msect} below: roughly speaking, these are boundary conditions that induce realizations of $\Delta(A,B)$ associated with sesquilinear forms.
One step beyond the hitherto discussed invariance properties, we are finally also able to characterize asymptotic positivity of the semigroup -- a rather weak property recently introduced in~\cite{DGK16_b}.

Our note  is organized as follows: 
In Section~\ref{sec:pre} we are going to present our setting, including relevant function spaces and the parametrization of our boundary conditions. Section~\ref{sec:main} contains our main result, Theorem~\ref{thm:generator} as well as a few examples that show its applicability. The proof of Theorem~\ref{thm:generator} is based on a number of technical lemmata, which will be proved in Sections~\ref{sec:msect} and~\ref{sec:caybdd}. Finally, we are going to discuss positivity, asymptotic positivity, and further invariance issues in Section~\ref{sec:invariance}.



\section{Function spaces, operators and boundary conditions}\label{sec:pre}
Whenever $I\subset \R$ is an interval, denote by $L^2(I)$  the usual space of complex-valued square integrable function with scalar product $\langle \cdot, \cdot \rangle_{L^2}$. Moreover, let $H^1(I)$ and $H^2(I)$ be the Sobolev spaces of order one and two, and set
 $H_0^2(I):=\{ \psi\in H^{2}(I)\colon \psi, \psi'\vert_{\partial I}=0\}$. Then one defines minimal and maximal operators in
\begin{align*}
L^2\big([0,\infty)\big)\oplus L^2\big([0,\infty)\big)
\end{align*} 
by
\begin{align*}
\Delta_{\max} \psi = \psi^{''}, \quad D(\Delta_{\max})&=H^2\big([0,\infty)\big)\oplus H^2\big([0,\infty)\big), \\
\Delta_{\min} \psi = \psi^{''}, \quad D(\Delta_{\min})&=H_0^2\big([0,\infty)\big)\oplus H_0^2\big([0,\infty)\big).
\end{align*}
Since $D(\Delta_{\max})/ D(\Delta_{\min})\cong \C^4$,
any realization
\begin{align*}
\Delta_{\min}  \subset \Delta \subset \Delta_{\max}
\end{align*}
is determined by a subspace $\mathcal{M}\subset \C^4$ and $\Delta=\Delta_{\mathcal{M}}$ with
\begin{align*}
D(\Delta_{\mathcal{M}}) = \left\{\psi \in D(\Delta_{\max})\colon [\underline{\psi}, \underline{\psi}']^T\in \mathcal{M} \right\}, 
\end{align*}
where
\begin{align*}
\underline{\psi} := \begin{bmatrix}
\psi_1(0)\\ \psi_2(0)
\end{bmatrix}
, \quad \underline{\psi}' := \begin{bmatrix}
\psi_1'(0)\\ \psi_2'(0)
\end{bmatrix}, 
\end{align*}
and one sets
\[
 [\underline{\psi}] :=
\begin{bmatrix}
\underline{\psi}\\ \underline{\psi}'
\end{bmatrix}.
\]
For $\dim \mathcal{M}=2$, $\mathcal{M}$ can be represented as kernel of a surjective linear map from $\C^4\rightarrow \C^2$, and hence 
the condition $\dim \mathcal{M}=2$ is equivalent to existence of matrices $A,B\in \C^{2\times 2}$ with $\mathcal{M}=\mathcal{M}(A,B)=\Ker (A\ B)$ and $\Rank (A\ B)=2$. With respect to our goal of studying the generator property of different realizations of Laplacians on $L^2\big([0,\infty)\big)\oplus L^2\big([0,\infty)\big)$, this is the only case which provides enough boundary conditions and we will restrict to it throughout this note. For simplicity, we refer to boundary conditions defined by  $[\underline{\psi}]\in \mathcal{M}(A,B)=\Ker (A\ B)$
in short as \emph{boundary conditions $A,B$}.

 Boundary conditions $A,B$ and $A', B'$ are equivalent if $\mathcal{M}(A,B)=\mathcal{M}(A',B')$, and one sets $$\Delta(A,B):=\Delta_{\mathcal{M}(A,B)}.$$
Note that $A'=CA$ and $B'=CB$ define equivalent boundary conditions whenever $C\in \C^{2\times 2}$ is invertible, since $\Ker (A' \, B')=\Ker (A \, B)$.

The following notion of regularity of boundary conditions has been introduced in \cite[\S~3.2]{HKS2015}. Note that there are also further notions of regularity, in particular the one introduced by Birkhoff, cf. \cite{Birkhoff1908_a, Birkhoff1908_b}, 
see also \cite{DSIII}. This regularity assumption does not agree with the one used here, see \cite[\S~3.3]{HKS2015}.

\begin{definition}[Regular and irregular boundary conditions]\label{def:reg}
Let  $A,B$ be boundary conditions with $\Rank (A\ B)=2$. These are called \textit{regular} if $A+ik B$ is invertible for some $k\in \C$, and \textit{irregular} otherwise.
\end{definition}

\begin{remark}\label{rem:irr}
It can be shown that $A,B$  are irregular if and only if $\Rank (A\ B)=2$ and $\Ker A \cap \Ker B\neq \{0\}$, cf. \cite[Prop. 3.3]{HKS2015}.	
\end{remark}

\section{Generator properties and examples}\label{sec:main}

The following is the main result of our paper.
Here $\sigma_{ess}$, $\sigma_{r}$, and $\sigma_{p}$ denote as usual the essential, residual, and point spectrum, respectively.

\begin{theorem}
\label{thm:generator}
Let the boundary conditions $A,B$ be regular. Then the following assertions hold.
  \begin{enumerate}[(a)]
  	\item $\sigma_{ess}(-\Delta(A,B))=[0,\infty)$, $\sigma_{r}(-\Delta(A,B))=\emptyset$, and $\lambda=k^2\in\sigma_{p}(-\Delta(A,B))$ if and only if $k$ with $\Im k>0$ solves $\det (A+ik B)=0$, and its geometric multiplicity is given by $\dim \Ker (A+ik B)$.
  \item $\Delta(A,B)$ is \emph{not} the infinitesimal generator of a $C_0$-semigroup on $L^2\big([0,\infty)\big)\oplus L^2\big([0,\infty)\big)$ if and only if 
  $\dim \Ker A=0$, $\dim \Ker B=1$, and $P^{\perp}A^{-1}BP^{\perp}=0$, where $P^{\perp}=\mathds{1}-P$ and $P$ denotes the orthogonal projection onto $\Ker B$. 
  \item If $\Delta(A,B)$ is a generator, then the $C_0$-semigroup extends to an analytic semigroup. 
  \item If $\Delta(A,B)$ is a generator and furthermore if any pole $s$ of $k\mapsto \mathfrak{S}(k;A,B)$ or $k\mapsto \mathfrak{S}(-\overline{k};A,B)^*$ with $\Im s>0$ satisfies $\Re s >0$, and $s=0$ is not a pole of any of these functions, 
  then the semigroup generated by $\Delta(A,B)$ is uniformly bounded. 
 \item If $A=L+P$ and $B=P^{\perp}$ for an orthogonal projection $P$ in $\C^2$, $P^{\perp}=\mathds{1}-P$, and $L\in \C^{2\times 2}$ with $P^{\perp}LP^{\perp}=L$, then $\Delta(A,B)$ is even the generator of a cosine operator function and hence of an analytic semigroup of angle $\frac{\pi}{2}$ on $L^2\big([0,\infty)\big)\oplus L^2\big([0,\infty)\big)$. This semigroup is always quasi-contractive, and in fact contractive if the numerical range of $L$ is contained in $\{z:\Re z\le 0\}$.
  \end{enumerate}
\end{theorem}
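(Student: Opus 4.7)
My approach is to derive an explicit formula for the resolvent $R(k^2,-\Delta(A,B))$ starting from the free resolvent $R(k^2,-\Delta_{\max})$ on two half-lines (whose integral kernel is built from $e^{ik|x-y|}$) and adding a rank-two boundary correction proportional to $\mathfrak{S}(k;A,B)\,e^{ik(x+y)}$; regularity of $A,B$ is exactly what makes this correction well-defined on a non-empty set of $k$. Every spectral or resolvent-norm assertion then reduces to a matrix statement about the Cayley transform. In particular, for part~(a), any $L^2$-solution of $-\psi''=k^2\psi$ with $\Im k>0$ is of the form $c_je^{ikx}$ on each half-line, so the boundary condition collapses to $(A+ikB)c=0$, giving $\sigma_p$. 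The essential spectrum is $[0,\infty)$ because $\Delta(A,B)$ is a finite-rank resolvent perturbation of any reference realization (e.g.\ Dirichlet). Finally, $\sigma_r=\emptyset$ follows by computing the adjoint $\Delta(A,B)^*=\Delta(A^\#,B^\#)$ with explicitly computable regular $A^\#,B^\#$ and converting $\bar\lambda\in\sigma_p(\Delta(A,B)^*)$ back into $\lambda\in\sigma_p(\Delta(A,B))$ via a direct comparison of the two determinantal conditions.

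For (b) and (c) I apply Hille--Yosida in its analytic form: $\Delta(A,B)$ generates an analytic semigroup iff $\|R(\lambda,\Delta(A,B))\|\le M/|\lambda|$ on a sector avoiding $[0,\infty)$, which via the resolvent formula amounts to $\|\mathfrak{S}(k;A,B)\|=O(1)$ as $|k|\to\infty$ in the corresponding $k$-sector. I argue by cases on $(\dim\Ker A,\dim\Ker B)$, subject to $\Rank(A\;B)=2$ and regularity. When $B$ is invertible, factoring $\mathfrak{S}=-(N+B)^{-1}(N-B)$ with $N:=(ik)^{-1}A$ immediately gives $\mathfrak{S}\to I$ as $|k|\to\infty$; the edge cases $B=0$ (Dirichlet) and $A=0$ (Neumann) are trivial; when $\dim\Ker A=\dim\Ker B=1$, regularity forces $\Ker A\oplus\Ker B=\C^2$ and a direct computation in the adapted basis makes $\mathfrak{S}$ $k$-independent. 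The only delicate case is $\dim\Ker A=0$, $\dim\Ker B=1$: setting $M:=A^{-1}B$ and letting $P$ be the orthogonal projection onto $\Ker B=\Ker M$, one has $\mathfrak{S}=-(I+ikM)^{-1}(I-ikM)$, which is bounded iff $M$ is not nilpotent, equivalently $P^\perp MP^\perp\ne 0$ (since $MP=0$). If $P^\perp MP^\perp=0$, then $M^2=0$, so $\mathfrak{S}(k;A,B)=-I+2ikM$ grows linearly along every ray and prevents any half-plane resolvent estimate, giving the non-generation claim of (b). In all other cases the sectorial Hille--Yosida estimate holds, yielding (c).

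For (d), the poles of $k\mapsto\mathfrak{S}(k;A,B)$ in $\{\Im k>0\}$ parametrize the eigenvalues of $-\Delta(A,B)$ via $\lambda=k^2$ (with residues encoding the spectral projections), and the poles of $k\mapsto\mathfrak{S}(-\overline k;A,B)^*$ play the analogous role for the adjoint. The hypothesis that every such pole $s$ lies in the open first quadrant and $s\ne 0$ ensures holomorphy and uniform boundedness of $\mathfrak{S}$ together with its adjoint partner on a region of the $k$-plane whose image under $k\mapsto k^2$ dominates the right half-plane of $\Delta(A,B)$ up to the spectral edge; a Neumann series on the resolvent formula then produces $\sup_{\Re\mu>0}\|R(\mu,\Delta(A,B))\|<\infty$, which together with the analyticity from (c) is standard-equivalent to uniform boundedness of the semigroup. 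For (e), left-multiplying the boundary condition by $P+P^\perp=\mathds{1}$ and projecting with $P$ and $P^\perp$ separately, the hypotheses $A=L+P$, $B=P^\perp$, $P^\perp LP^\perp=L$ (which in particular yield $PL=0$) decouple the system into $P\underline\psi=0$ (Dirichlet on $\Ran P$) and $L\underline\psi+P^\perp\underline\psi'=0$ (Robin on $\Ran P^\perp$). Consequently $-\Delta(A,B)$ is the operator associated with the closed sesquilinear form
\[
\mathfrak{a}(u,v)=\int_0^\infty u'\cdot\overline{v'}\,dx\;-\;\langle L\underline u,\underline v\rangle_{\C^2},\qquad u,v\in\{w\in H^1\oplus H^1:P\underline w=0\}.
\]
The principal part is self-adjoint and bounded below and $L$ is a bounded boundary perturbation, so Kisy\'nski's characterization yields a cosine family (hence an analytic semigroup of angle $\pi/2$). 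Quasi-contractivity is the form estimate $\Re\mathfrak{a}(u,u)\ge -c\|u\|^2$, and the numerical-range hypothesis on $L$ upgrades this to $\Re\mathfrak{a}(u,u)\ge 0$, which gives contractivity via Lumer--Phillips.

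The principal obstacle is the sharp dichotomy in~(b): the linear growth of $\mathfrak{S}$ in the nilpotent configuration must be promoted to the complete failure of $\Delta(A,B)$ to generate any $C_0$-semigroup (ruling out every shifted half-plane estimate, not just the sectorial one), while simultaneously one must verify in each of the remaining rank configurations that the bound on $\mathfrak{S}(k;A,B)$ really is uniform in $|k|$. Both directions are precisely what the auxiliary lemmata of Sections~\ref{sec:msect} and~\ref{sec:caybdd} are designed to supply.
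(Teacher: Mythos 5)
Your overall architecture --- the resolvent kernel built from the free kernel plus the correction $\tfrac{i}{2k}e^{ikx}\mathfrak{S}(k;A,B)e^{iky}$, the case analysis on $(\dim\Ker A,\dim\Ker B)$, and the identification of the nilpotent configuration $P^\perp A^{-1}BP^\perp=0$ as the unique obstruction --- is exactly the paper's, and your treatments of (a), (c) and (e) (forms and the Lions/Kisy\'nski criterion for the cosine family) are sound; the only cosmetic slip is that in the case $\dim\Ker A=\dim\Ker B=1$ the Cayley transform is not $k$-independent, merely uniformly bounded away from its single pole. The genuine gap is the non-generation half of (b). From $\mathfrak{S}(k;A,B)=-\mathds{1}+2ikM$ you obtain only the growth rate $\norm{\mathfrak{S}(k;A,B)}=\mathcal O(|k|)$, and this does not by itself ``prevent any half-plane resolvent estimate'': in the kernel the growth of $\mathfrak{S}$ is damped by the prefactor $\tfrac{1}{2k}$ and by $\norm{e^{ik\cdot}}_{L^2}^2=(2\Im k)^{-1}$, so the correction term is a priori only $\mathcal O\big((\Im k)^{-1}\big)$, which is perfectly consistent with generation. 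What is needed is a \emph{lower} bound on the resolvent norm that beats the Hille--Yosida decay $\norm{(-\Delta(A,B)+\kappa^2)^{-1}}\le M/(\kappa^2-\omega)$ necessary for any $C_0$-semigroup. The paper supplies this in Lemma~\ref{lem:resbad} by applying the resolvent at $k=i\kappa$ to the explicit test function $u=(\chi_{[0,1]},0)^T$ and reading off the second component, where the off-diagonal entry $2ikB_{21}$ of $\mathfrak{S}$ survives without cancellation; this yields $\norm{(-\Delta(A,B)+\kappa^2)^{-1}}\gtrsim \kappa^{-3/2}$, contradicting the required $\kappa^{-2}$ decay. You explicitly defer this step to ``the auxiliary lemmata,'' but it is the crux of the theorem and must be carried out.

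Part (d) is also flawed as stated. The claimed bound $\sup_{\Re\mu>0}\norm{(\Delta(A,B)-\mu)^{-1}}<\infty$ cannot hold: by (a) one has $\sigma(\Delta(A,B))\supseteq(-\infty,0]$, so $\norm{(\Delta(A,B)-\mu)^{-1}}\ge \dist(\mu,\sigma(\Delta(A,B)))^{-1}\ge|\mu|^{-1}\to\infty$ as $\mu\to 0^+$. Moreover, even the correct necessary half-plane estimate $\norm{(\Delta(A,B)-\mu)^{-1}}\le M/\Re\mu$ is \emph{not} sufficient for boundedness of a semigroup, not even on a Hilbert space; this is precisely why the paper invokes Gomilko's theorem, which characterizes bounded semigroups on Hilbert spaces via the condition
\[
\sup_{\delta>0}\ \delta\int_{\delta-i\infty}^{\delta+i\infty}\Big(\norm{(\Delta(A,B)-\lambda)^{-1}f}^2+\norm{(\Delta(A,B)^*-\lambda)^{-1}f}^2\Big)\,|d\lambda|<\infty ,
\]
verified there using a sectorial estimate with vertex at the origin --- available exactly because the hypothesis excludes poles of $\mathfrak{S}(\cdot;A,B)$ and $\mathfrak{S}(-\overline{\,\cdot\,};A,B)^*$ on $i[0,\infty)$, in particular at $0$. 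Alternatively you could aim directly at the bounded-analytic criterion $\norm{\lambda(\Delta(A,B)-\lambda)^{-1}}\le M$ on a sector containing the closed right half-plane; either way, your appeal to a ``standard equivalence'' between a uniform resolvent bound and semigroup boundedness must be replaced by an actual argument.
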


If $\Delta(A,B)$ is a generator, then the semigroup consists of operators that are bounded on $L^2$ and map $L^2$ into $H^2\hookrightarrow L^\infty$, hence are integral operators by the Kantorovich--Vulikh Theorem.

\begin{remark}[Irregular boundary conditions do not define generators]
If $A,B$ are irregular, $\Delta(A,B)$ cannot be a generator of a $C_0$-semigroup since $\sigma(\Delta(A,B))=\C$. For the case of general finite metric graphs, determining spectra and resolvent estimates for irregular boundary conditions is more involved.
\end{remark}

	\begin{remark}[Multiplicity of eigenvalues]\label{rem:mult}
		For regular boundary conditions $A,B$, the geometric multiplicity of an eigenvalue $-k^2$ of $\Delta(A,B)$ is at most two, and equal to two if and only if  $A+ik B=0$. 
		This implies that $\Ker B=\Ker A=\{0\}$, and 
		that there are equivalent boundary conditions $A' = l \cdot\mathds{1}$ and $B'=\mathds{1}$ with $\Re l{{=\Im k>}}0$.
	\end{remark}

Unfortunately, we are not able to determine the semigroup's analyticity angle in the general case; as a matter of fact, we cannot exclude that $\Delta(A,B)$ is always the generator of a cosine operator function. Indeed, the proof of (c) shows that the spectrum of $\Delta(A,B)$ is always contained in a parabola centered around the real axis; this is 	a necessary condition for generation of a cosine operator function, cf.~\cite[Thm.~3.14.18]{AreBatHie01}.

\begin{proof}
The proof of (a) can be deduced from \cite[Sec.~4]{HKS2015}: The statement on the residual spectrum follows from  \cite[Prop. 4.6]{HKS2015} for the case of only external edges. Essential spectra are discussed in  \cite[Prop. 4.11]{HKS2015}. Note that for non-self-adjoint
operators there are various notions of the essential spectrum. Five types, defined in
terms of Fredholm properties and denoted by $\sigma_{e_j}$ for $j = 1, \ldots, 5$, are discussed
in detail in \cite[Chap. IX]{EE_book}. All these essential spectra coincide for  self-adjoint $T$, but
for closed non-self-adjoint $T$ in general one has  only the inclusions $\sigma_{e_j}(T)\subset \sigma_{e_j}(T)$ 
for $j <i$. However, here one even has $\sigma_{ess}(-\Delta(A,B))=\sigma_{e_j}(-\Delta(A,B))$ for $i=1,\ldots,5$. The statement on the eigenvalues follows from the \textit{Ansatz} for the eigenfunctions
\begin{align*}
\psi(x; k)=\begin{bmatrix}
\alpha_1(k)e^{ikx_1} \\ \alpha_2(k)e^{ikx_2} 
\end{bmatrix},
\end{align*} 
which is square integrable only if $\Im k>0$, and there are non-trivial $\alpha_j(k)$, $j=1,2$, such that $\psi(\cdot;k)\in D(\Delta(A,B))$ if and only if $\det (A+ik B)=0$, and the geometric multiplicity is given by $\dim \Ker (A+ik B)$.
For part (b), uniform boundedness of the Cayley transform is characterized in Lemma~\ref{lem:msec}, Lemma~\ref{lem:Ok} and Lemma~\ref{lem:11} below. The corresponding resolvent estimates are given in Lemma~\ref{lem:resbdd} and~\ref{lem:resbad} below, where Lemma ~\ref{lem:resbad} discusses the case of non-generators. Lemma~\ref{lem:resbdd} implies that for $\omega>|S|$, where $S$ is a set of singularities of $\mathfrak{S}(k;A,B)$ defined there,  $\Delta(A,B)$ is a closed densely defined operator with $[\omega^2,\infty)\in \rho(\Delta(A,B))$. 
For any sector 
\begin{align}
\Sigma_{\theta}:=\{k \in \C \colon  \Im k>0, |\Re k| \leq \tan(\theta)\Im k\}, \quad \theta\in (0,\pi/2),
\end{align}
one has $|k| \cong |\Re k| + \Im k\leq (1+\tan(\theta)) \Im k$, and therefore
 \begin{align*}
 \norm{(-\Delta(A,B) -k^2 )^{-1}} \leq \frac{1}{|k|^2}+\frac{C_{\omega}}{|k|^2} \leq \frac{1+C_{\omega}}{|k|^2}, \quad k\in \Sigma_c, \quad |k|>\omega.
 \end{align*}	
In particular, $\Delta(A,B)$ is sectorial on the sector $\Sigma_{2\theta}-\omega^2\sin(\pi-2\theta)$. 
Shifting the sector allows avoiding the two poles of $\mathfrak{S}(k;A,B)$: this finishes the proof of (b) and (c), whereas (e) is proved in Lemma~\ref{lem:cosine}.

A necessary condition for boundedness of a semigroup is that the spectrum of its generator $\mathcal A$ is contained in $\{z\in \C:\Re z\le 0\}$. 
To prove (d), recall that by a celebrated result due to Gomilko~\cite{Gom99}, for semigroups acting on a Hilbert space $H$ boundedness is equivalent to said spectral inclusion \textit{and} the additional condition
$$\sup_{\delta>0} \delta \int_{\delta-i\infty}^{\delta+i\infty} \left(\norm{(\mathcal A-\lambda)^{-1} f}^2 +\norm{(\mathcal A^*-\lambda)^{-1} f}^2\right) |d\lambda|<\infty\qquad\hbox{for all } f\in H.$$
Here, the kernel of $(-\Delta(A,B)-k^2)^{-1}$ is given below by \eqref{eq:res_infty} and the kernel of $(-\Delta(A,B)^*-\overline{k^2})^{-1}$ is given by the adjoint kernel $r_{A,B}(y,x;-\overline{k})^*$. Analogously to Lemma~\ref{lem:resbdd} one can estimate the resolvent norm away from the singularities of $\mathfrak{S}(k;A,B)$ and $\mathfrak{S}(-\overline{k};A,B)^*$. These singularities are finitely many and have by assumption a finite, strictly positive distance to the imaginary axis. In particular, the estimate in Lemma~\ref{lem:resbdd} implies sectoriality in sectors with vertex zero
\begin{align*}
\norm{(\Delta(A,B)-\lambda)^{-1}} \leq \frac{C}{|\lambda|} \quad \hbox{for}\quad \lambda \in \Sigma_{\theta}:= \{z\in \C\setminus \{0\} \colon |\arg z|> \theta\}, \quad \theta \in (0,\pi/2),
\end{align*}  
and an analogous estimate holds for $\norm{(\Delta(A,B)^*-\lambda)^{-1}}$. Therefore,
\begin{multline*}
\sup_{\delta>0} \delta \int_{\delta-i\infty}^{\delta+i\infty} \left(\norm{(\Delta(A,B)-\lambda)^{-1} f}^2 +\norm{(\Delta(A,B)^*-\lambda)^{-1} f}^2\right) |d \lambda| \\
\leq C \sup_{\delta>0} \delta  \int_{\delta-i\infty}^{\delta+i\infty} \frac{1}{|\delta +i\lambda|^{2}} |d\lambda| 
\leq C \sup_{\delta>0} \delta \int_{|\lambda|>\delta} \frac{1}{|\lambda|^{2}} d|\lambda| = 2C <\infty.
\end{multline*}
This completes the proof.
\end{proof}

The generator property is traced back to the uniform boundedness of the  Cayley transform $k\mapsto \mathfrak{S}(k;A,B)$ outside a compact set containing its poles, where for irregular boundary conditions one might set $\mathfrak{S}(k;A,B)=\infty$. Some  cases for the possible behavior of the Cayley transform are illustrated in the following examples.

\begin{example}[Boundary conditions defining operators associated with sectorial forms]\label{ex:sec}
Let 	
\begin{align*}
A=\begin{bmatrix}
A_{11} & 0 \\ A_{21} & 0
\end{bmatrix}\quad \hbox{and} \quad B=\begin{bmatrix}
1 & 0 \\ 0 & 1
\end{bmatrix}.
\end{align*}
for any $A_{11},A_{22}\in \C$: the boundary conditions $A,B$ correspond to $A\underline{\psi}+B\underline{\psi'}=0$, i.e.,
\[
A_{11}\psi_1(0)+\psi'_1(0)=0, \quad A_{21}\psi_1(0)+\psi_2'(0)=0.
\]
Then $A,B$ are regular since $\det(A+ik B)=ik(A_{11}+ik)\neq 0$ for $k\notin\{0, iA_{11} \}$.
The Cayley transform  
	\begin{align}\label{eq:cayley_msec-exa}
	\mathfrak{S}(k;A,B) 
	=\begin{bmatrix}
	-(A_{11} + ik)^{-1} (A_{11} - ik)& 0  \\
	(ik)^{-1}A_{12} [(A_{11} + ik)^{-1} (A_{11} - ik)  -1] & \mathds{1} 
	\end{bmatrix}
	\end{align}  
	 is uniformly bounded  away from its singularity $\{0, iA_{11} \}$, where $0$ is in fact a removable singularity.
	Since $\dim \Ker B=0$, by Theorem~\ref{thm:generator} $\Delta(A,B)$ generates an analytic semigroup;  if $\Im  i A_{11}>0$, then $A_{11}^2$ is a (simple, by Remark~\ref{rem:mult})  eigenvalue of $\Delta(A,B)$, 
	and $\sigma_{ess}(-\Delta(A,B))=[0,\infty)$.
	Note that $-\Delta(A,B)$ is associated with the sesquilinear form defined by
	\begin{equation}\label{eq:form-0}
	\delta_{A,B}[\psi]= \norm{\psi'}^2_{L^2} - \langle A\underline{\psi}, \underline{\psi}\rangle_{\C^2},  \qquad \psi \in  H^1([0,\infty))\oplus H^1([0,\infty))
	\end{equation}
	and hence sectorial, in particular, the semigroup generated by $\Delta(A,B)$ is contractive if the numerical range of $A$ is contained in the left halfplane: this is the case if and only if $A_{21}=0$.  
	We will refer to boundary conditions of this type as \emph{m-sectorial}.
\end{example}

In a more general setting the question if $-\Delta(A,B)$ is associated with a form of the type given in \eqref{eq:form-0} is discussed in \cite{Hussein2014}. 
The following is a prominent example from the theory of $\mathcal{PT}$-symmetric operators, and it is discussed for instance in \cite[Example 3.5]{HKS2015} and also in the references given there.


\begin{example}[Boundary conditions defining operators not associated with sectorial forms]
\label{ex:new}
Consider
\begin{align*}
A_{\tau}=\begin{bmatrix}
1 & -e^{i\tau} \\ 0 & 0
\end{bmatrix}\quad \hbox{and} \quad B_\tau=\begin{bmatrix}
0 & 0 \\ 1 & e^{-i\tau}
\end{bmatrix}, \quad \tau \in [0,\pi/2),
\end{align*}
leading to the boundary conditions
\[
\psi_1(0)=e^{i\tau}\psi_2(0),\quad \psi_1'(0)=-e^{-i\tau}\psi'_2(0).
\]
Here, $\det (A_\tau+ikB_\tau)=2ki \cos \tau\ne 0$ and hence by Theorem~\ref{thm:generator} $\Delta(A_\tau,B_\tau)$ has no eigenvalues. 
Integration by parts gives
\begin{equation*}
\langle -\Delta(A_{\tau},B_{\tau})\psi, \psi\rangle_{L^2} =
\norm{\psi'}_{L^2}^2 + (1-e^{2i\tau}) \psi_2(0)\overline{\psi_2}'(0), \quad \psi \in D(\Delta(A_\tau,B_\tau)). 
\end{equation*}
The trace of the derivative cannot be balanced by $\norm{\psi'}_{L^2}^2$, hence in particular $\psi\mapsto \langle -\Delta(A_{\tau},B_{\tau})\psi, \psi\rangle_{L^2}$ does not define a closed sesquilinear	 form: indeed, the numerical range of this form is the entire complex plane. Nevertheless, $\Delta(A_{\tau},B_{\tau})$ does generate an analytic semigroup, as in fact $\Delta(A_{\tau},B_{\tau})$ is similar to the one-dimensional Laplacian on $\R$. Observe that because $\Delta(A_{\tau},B_{\tau})$ is not dissipative, the semigroup it generates cannot be contractive; it is bounded, though, due to its similarity with the Gaussian semigroup on $\R$. Observe that $A_\tau,B_\tau$ are irregular boundary conditions for $\tau=\frac{\pi}{2}$. 
\end{example}

The following two examples are slight modifications of cases discussed in~\cite[Section XIX.6, page 2373]{DSIII}.

\begin{example}[Intermediate boundary conditions]
\label{ex:intermediate}
	Consider
	\begin{align*}
A=\begin{bmatrix}
1 & 0 \\ 0 & 1
\end{bmatrix}\quad \hbox{and} \quad B=\begin{bmatrix}
0 & 0 \\ -1 & 0
\end{bmatrix},
	\end{align*}
and the boundary conditions $A\underline{\psi}+B\underline{\psi'}=0$, i.e.,
\[
\psi_1(0)=0, \quad \psi_1'(0)=\psi_2(0).
\]
	Then $\det(A+ikB)=1$  for all $k\in\mathbb C$, i.e., $A,B$ are regular. Furthermore,  $\dim \Ker A=0$ and $\dim \Ker B=1$, but $P^{\perp}BP^{\perp}=0$ and $PBP^{\perp}=-1\neq 0$ and
	\begin{align*}
	\mathfrak{S}(k;A,B) = -\begin{bmatrix}
	1 & 0 \\ 2ik & 1
	\end{bmatrix}, \quad k\in \C.
	\end{align*}
We conclude from Theorem~\ref{thm:generator} that $\Delta(A,B)$ does not generate an analytic semigroup on $L^2\big([0,\infty)\big)\oplus L^2\big([0,\infty)\big)$, although its (purely essential) spectrum is $[0,\infty)$.
\end{example}

\begin{example}[Totally degenerate boundary conditions]\label{ex:degenerate}
	Consider 
	\begin{align*}
	A=\begin{bmatrix}
	1 & 0 \\ 0 & 0
	\end{bmatrix}\quad \hbox{and} \quad B=\begin{bmatrix}
	0 & 0 \\ 1 & 0
	\end{bmatrix}.
	\end{align*}
	Then $\Rank (A\ B)=2$, but $\det (A + ik B)=0$ for any $k\in \C$, and hence $A,B$ are irregular. 
\end{example}

\section{Cayley transforms}\label{sec:msect}

In this section we are going to derive properties of the Cayley transform that are essential in the proof of Theorem~\ref{thm:generator}.

\subsection{M-sectorial boundary conditions}
For regular boundary conditions the Cayley transform \eqref{eq:Cayley} is well-defined except for  at most two $k\in \C$. 
One important class of boundary conditions are related to quadratic forms. 

\begin{definition}
Boundary conditions $A,B$ are said to be \emph{m-sectorial} if there exist $L,P\in \C^{2\times 2}$ such that $P$ is an orthogonal projection, $P^\perp=\mathds{1}-P$, and $L=P^\perp L P^\perp$, and such that  $A=L+P$ and $B=P^\perp$.
\end{definition}

The reason for this name is that whenever $A,B$ are m-sectorial boundary conditions,  $-\Delta (A,B)$ is associated with the sectorial sesquilinear form, cf. e.g. \cite[Def.~1.7]{Ouh05} for this notion,
\begin{multline}\label{eq:form}
\delta_{P,L}[\psi]= \norm{\psi'}^2_{L^2} - \langle LP^\perp\underline{\psi}, P^\perp\underline{\psi}\rangle_{\C^2}, \, \psi \in \{\psi \in H^1([0,\infty))\oplus H^1([0,\infty))\colon P\underline{\psi}=0\};
\end{multline}
$-\Delta (A,B)$ is hence an m-sectorial operator and $\Delta (A,B)$ generates an analytic semigroup. 
M-sectorial boundary conditions are in particular regular
since 
\begin{align*}
A+ik B = \begin{bmatrix}
L +ik P^{\perp} & 0 \\ 0 & P
\end{bmatrix}
\end{align*} 
is invertible for $k>\norm{L}$.
The Cayley transform can be estimated as follows.

We first consider the case $\dim \Ker B=1$: then $\dim \Ran L\leq 1$, and with respect to $\Ran P$ and $\Ran P^{\perp}$
one obtains a block decomposition 
\begin{align*}
A\pm ik B=\begin{bmatrix}
L \pm ik \mathds{1}& 0 \\
0 & \pm ik \mathds{1} & 
\end{bmatrix}\quad \hbox{and}\quad 
\mathfrak{S}(k;A,B) =
\begin{bmatrix}
-(L + ik \mathds{1})^{-1}(L - ik \mathds{1})& 0 \\
0 & \mathds{1} & 
\end{bmatrix}.
\end{align*} 
For $A$ invertible and $B=\mathds{1}$ one has
\begin{align*}
\mathfrak{S}(k;A,B) = -(A+ik\mathds{1})^{-1} (A-ik\mathds{1}) = -(A/ik+\mathds{1})^{-1} (A/ik-\mathds{1}),
\end{align*}
and hence 
\begin{align*}
\norm{\mathfrak{S}(k;A,B)} \leq \frac{2}{1-\tfrac{\norm{A}}{|k|}} \quad \hbox{for } |k|> \Vert A\Vert.
\end{align*}
Therefore, $\mathfrak{S}(k;A,B)$ is uniformly bounded away from its poles, i.e., outside a compact set. 

If $B=\mathds{1}$  and $\dim \Ran L= 1$, one obtains a block decomposition with respect to $\Ker L$ and  $(\Ker L)^{\perp}$
\begin{align*}
A\pm ik B=\begin{bmatrix}
L_{11} \pm ik \mathds{1}& 0 \\
L_{12} & \pm ik \mathds{1} 
\end{bmatrix},
\end{align*} 
and hence
\begin{equation}\label{eq:cayley_msec}
\begin{split}
	\mathfrak{S}(k;A,B) &=-\begin{bmatrix}
(L_{11} + ik \mathds{1})^{-1}& 0 \\
-(ik)^{-1} L_{12}(L_{11} + ik \mathds{1})^{-1} & + (ik)^{-1} \mathds{1} 
\end{bmatrix}
\begin{bmatrix}
L_{11} - ik \mathds{1}& 0  \\
L_{12} & - ik \mathds{1} 
\end{bmatrix} \\
&=\begin{bmatrix}
-(L_{11} + ik \mathds{1})^{-1} (L_{11} - ik \mathds{1})& 0 \\
(ik)^{-1}L_{12} [(L_{11} + ik \mathds{1})^{-1} (L_{11} - ik \mathds{1})  -\mathds{1}] & \mathds{1} 
\end{bmatrix}.
\end{split}  
\end{equation}
Similarly to the case of $A$ invertible and $B=\mathds{1}$,  using \eqref{eq:cayley_msec} one can show that  $\mathfrak{S}(k;A,B)$ is uniformly bounded away from its poles for general m-sectorial boundary conditions. This is summarized in the following. 
\begin{lemma}\label{lem:msec}
	Let $A,B$ define $m$-sectorial boundary conditions. Then $\mathfrak{S}(k;A,B)$ is uniformly bounded outside a compact set.
\end{lemma}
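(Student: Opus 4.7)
The plan is to exploit the fact that the m-sectoriality condition $L = P^\perp L P^\perp$, together with $B = P^\perp$ and $A = L + P$, makes both $A \pm ikB$ block-diagonal in the orthogonal decomposition $\C^2 = \Ran P^\perp \oplus \Ran P$. Indeed, $L$ leaves $\Ran P^\perp$ invariant and annihilates $\Ran P$, while $P^\perp$ and $P$ are the identity on $\Ran P^\perp$ and $\Ran P$ respectively and vanish on the complementary subspace. Writing $L_*$ for the restriction of $L$ to $\Ran P^\perp$, one then has
\begin{align*}
A \pm ikB = \bigl(L_* \pm ik\,\mathds{1}_{\Ran P^\perp}\bigr) \oplus \mathds{1}_{\Ran P},
\end{align*}
regardless of the rank of $P$.

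The Cayley transform is then also block-diagonal, with the $\Ran P$-block equal to $-\mathds{1}_{\Ran P}$ (of norm at most $1$) and the $\Ran P^\perp$-block equal to $-(L_* + ik)^{-1}(L_* - ik)$. I would then rewrite this latter block as
\begin{align*}
-\bigl(\mathds{1}_{\Ran P^\perp} + L_*/(ik)\bigr)^{-1}\bigl(L_*/(ik) - \mathds{1}_{\Ran P^\perp}\bigr)
\end{align*}
and apply a Neumann series estimate to the inverse factor for $|k| > \norm{L_*}$, which yields $\norm{\mathfrak{S}(k;A,B)} \leq 1 + 2/(1-\norm{L_*}/|k|)$. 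For $|k| \geq 2\norm{L_*}$ this gives a uniform bound (of $5$, say), so $\mathfrak{S}(\cdot;A,B)$ is bounded outside the closed disc of radius $2\norm{L_*}$, which is the desired compact set and contains all the poles, located where $ik$ is an eigenvalue of $-L_*$.

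The proof is not particularly obstructed: the substantive step is the structural observation that the m-sectoriality constraint annihilates the block off-diagonal entries of $A \pm ikB$, in contrast to the more general situation underlying formula \eqref{eq:cayley_msec}, where a nonzero $L_{12}$ may arise. Once this block-diagonal form is in hand, the remaining estimate is exactly the same Neumann series bound already used for the $A$-invertible, $B = \mathds{1}$ case in the discussion preceding the lemma, and no further computation is needed.
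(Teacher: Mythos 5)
Your proof is correct. The structural fact you isolate -- that $A=L+P$, $B=P^\perp$, $L=P^\perp LP^\perp$ force $A\pm ikB$ to be block-diagonal with respect to the orthogonal splitting $\C^2=\Ran P^\perp\oplus\Ran P$, with blocks $L_*\pm ik\,\mathds{1}$ and $\mathds{1}$ -- is also the paper's starting point (it is precisely the display used to show that m-sectorial conditions are regular), and your Neumann-series bound on $-(\mathds{1}+L_*/(ik))^{-1}(L_*/(ik)-\mathds{1})$ for $|k|>\norm{L_*}$ is the same estimate the paper records for the case $A$ invertible, $B=\mathds{1}$. Where you genuinely differ is in organization: the paper argues case by case along Table~\ref{fig:tab}, and for $B=\mathds{1}$ with $L$ of rank one it introduces a second decomposition along $\Ker L\oplus(\Ker L)^\perp$, leading to the triangular formula \eqref{eq:cayley_msec} whose off-diagonal entry $(ik)^{-1}L_{12}\,[\cdots]$ must then be bounded separately (a step the paper only asserts ``similarly''). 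Your point that the Neumann estimate requires no invertibility of $L_*$ makes that extra decomposition, and hence the off-diagonal term, unnecessary, and it treats all ranks of $P$ uniformly; so your argument is more economical and actually supplies the detail the paper leaves as a sketch. Two minor remarks: since the two blocks act on orthogonal subspaces, $\norm{\mathfrak{S}(k;A,B)}$ is the maximum, not the sum, of the block norms (your additive bound is of course still an upper bound); and your ``contrast'' with \eqref{eq:cayley_msec} is slightly misleading, since that formula also belongs to the m-sectorial class (it is the case $P=0$, $B=\mathds{1}$) -- the nonzero $L_{12}$ there comes from the choice to decompose along $\Ker L$ rather than along $\Ran P^\perp\oplus\Ran P$, not from a weaker hypothesis. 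Finally, your sign $-\mathds{1}_{\Ran P}$ for the $\Ran P$-block is the one consistent with the definition $A=L+P$, $B=P^\perp$, and the discrepancy with the corresponding display in the paper is immaterial for the uniform bound.
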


Depending now on the dimension of $\Ker A$ and $\Ker B$ one can distinguish the following cases listed in Table~\ref{fig:tab}, where the cases $\dim \Ker A= 1$, $\dim \Ker B= 2$, and $\dim \Ker A= 2$, $\dim \Ker B= 1$ collide with the rank condition, and hence are excluded. We have already remarked that for $\Rank(A\ B)\neq 2$ one has $\sigma(\Delta(A,B))=\C$.

\begin{table}[H]
	\begin{tabular}{c c c c}
		$\dim \Ker A$ & $\dim \Ker B$ & equiv. b.c. & $-\Delta(A,B)$ \\ \hline
		& & & \\
		0 & 0 & $A'=B^{-1}A$ and $B'=\mathds{1}$ & m-sectorial \\
		1 & 0 & $A'=B^{-1}A$ and $B'=\mathds{1}$ &  m-sectorial\\
		2 & 0 & $A'=B^{-1}A$ and $B'=\mathds{1}$ & m-sectorial \\
		0 & 2 & $A=\mathds{1}$ and $B=0$ &  m-sectorial\\
		0 & 1 & some block representation & regular  \\
		1 & 1 & some block representation & regular or irregular \\[1pt]
	\end{tabular}
	\caption{Different cases of boundary conditions}\label{fig:tab}
\end{table}

\subsection{The case $\dim \Ker A =0$ and $\dim \Ker B =1$}

\begin{lemma}
	Let $\dim \Ker A =0$ and $\dim \Ker B =1$. Then $\Rank (A\; B)=2$, equivalent boundary conditions are given by
	\begin{align*}
	A'=\mathds{1}\quad \hbox{and} \quad B'=A^{-1}B,
	\end{align*} 
	and the boundary conditions $A,B$ are regular. 
\end{lemma}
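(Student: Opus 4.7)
The plan is straightforward since the hypothesis $\dim\Ker A=0$ on a $2\times 2$ matrix forces $A$ to be invertible, and all three assertions follow from this observation with very little work.

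First, invertibility of $A$ means its two columns are already linearly independent in $\C^2$, so $\Rank(A\ B)\ge \Rank A=2$, and of course the rank cannot exceed $2$; this yields the rank condition.

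Next, for the equivalence of boundary conditions I would appeal to the observation made right after the definition of $\Delta(A,B)$: for any invertible $C\in\C^{2\times 2}$ the pair $(CA,CB)$ defines the same subspace $\mathcal{M}(A,B)=\Ker(A\ B)$, hence equivalent boundary conditions. Taking $C=A^{-1}$ gives precisely $A'=\mathds{1}$ and $B'=A^{-1}B$.

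Finally, regularity in the sense of Definition~\ref{def:reg} amounts to producing a single $k\in\C$ for which $A+ikB$ is invertible. Choosing $k=0$ yields $A+ikB=A$, which is invertible by hypothesis, so $A,B$ are regular. (Alternatively, one could note that $\det(A+ikB)=\det(A)\det(\mathds{1}+ikA^{-1}B)$ is a non-zero polynomial in $k$ with value $\det(A)\neq 0$ at $k=0$, so $A+ikB$ is invertible outside a finite exceptional set.) There is no substantial obstacle here; the lemma is really just a structural bookkeeping step that puts the case $\dim\Ker A=0$, $\dim\Ker B=1$ into the normalized form $(\mathds{1},A^{-1}B)$ needed in the subsequent analysis of the Cayley transform.
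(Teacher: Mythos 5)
Your proposal is correct and follows essentially the same route as the paper: rank via the invertibility of $A$, and equivalence by multiplying the pair $(A,B)$ on the left by the invertible matrix $C=A^{-1}$. The only (immaterial) difference is in the last step, where the paper invokes the criterion that regularity is equivalent to $\Ker A\cap\Ker B=\{0\}$ (Remark~\ref{rem:irr}), while you verify Definition~\ref{def:reg} directly by taking $k=0$, which is equally valid since the definition only requires invertibility of $A+ikB$ for some $k\in\C$.
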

\begin{proof}
	First, since $A$ is invertible, its columns are linearly independent and therefore $\Rank (A\;B)\geq 2$, and  $A'$, $B'$ define equivalent boundary conditions. Furthermore, since $\Ker A \cap \Ker B= \{ 0\}$ these boundary conditions are regular.
\end{proof}
Let $P$ be the orthogonal projection onto $\Ker B$ and $P^{\perp}=\mathds{1}-P$, then without loss of generality, consider   
\begin{align*}
A= \mathds{1}\quad \hbox{and} \quad 
B= \begin{bmatrix} P^{\perp}BP^{\perp} & 0 \\
P BP^{\perp} & 0
\end{bmatrix}.
\end{align*}

\begin{lemma}
\label{lem:Ok}
	Let $\dim \Ker A =0$ and $\dim \Ker B =1$. Then the Cayley transform $\mathfrak{S}(\cdot;A,B)$ is uniformly bounded outside a compact set containing its only possible pole if and only if $P^{\perp}BP^{\perp}\neq 0$. If $P^{\perp}BP^{\perp}= 0$, and hence $P BP^{\perp}\neq 0$, then     
	$\norm{\mathfrak{S}(k;A,B)}=\mathcal{O}(|k|)$ for $|k|\to \infty$
\end{lemma}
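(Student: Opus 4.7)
The plan is to perform a direct block-matrix computation in the orthogonal decomposition $\C^2 = \Ran P^{\perp} \oplus \Ran P$ and then read off the asymptotics.

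First, I would use the reduction that is already recorded just above the statement: since $A$ is invertible, the boundary conditions $A,B$ are equivalent to $A' = \mathds{1}$, $B' = A^{-1}B$. Since $P$ is the orthogonal projection onto $\Ker B'$, one has $B'P = 0$, hence $B' = B'P^{\perp} = (P + P^{\perp})B'P^{\perp} = P^{\perp}B'P^{\perp} + PB'P^{\perp}$. Thus, in the decomposition $\C^2 = \Ran P^{\perp} \oplus \Ran P$, we may assume
\[
A \;=\; \begin{bmatrix} 1 & 0 \\ 0 & 1 \end{bmatrix}, \qquad B \;=\; \begin{bmatrix} b_{11} & 0 \\ b_{21} & 0 \end{bmatrix},
\]
with the scalars $b_{11} := P^{\perp}BP^{\perp}$ and $b_{21} := PBP^{\perp}$ (the blocks are $1\times 1$ because $\dim \Ran P^{\perp} = \dim \Ran P = 1$). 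Note that $(b_{11},b_{21}) \neq (0,0)$ since $\Rank(A\ B) = 2$ forces $B \neq 0$ on $\Ran P^\perp$.

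Second, I would compute $A \pm ikB$ and its inverse explicitly. The matrix $A + ikB$ is lower triangular with diagonal entries $1 + ikb_{11}$ and $1$, so
\[
(A+ikB)^{-1} \;=\; \begin{bmatrix} \dfrac{1}{1+ikb_{11}} & 0 \\[4pt] \dfrac{-ikb_{21}}{1+ikb_{11}} & 1 \end{bmatrix},
\]
defined whenever $1 + ikb_{11} \neq 0$. Multiplying out and simplifying the lower-left entry via $ikb_{21}(1-ikb_{11}) + ikb_{21}(1+ikb_{11}) = 2ikb_{21}$ yields
\[
\mathfrak{S}(k;A,B) \;=\; -(A+ikB)^{-1}(A-ikB) \;=\; \begin{bmatrix} -\dfrac{1-ikb_{11}}{1+ikb_{11}} & 0 \\[4pt] \dfrac{2ikb_{21}}{1+ikb_{11}} & -1 \end{bmatrix}.
\]

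Third, I would split into the two cases. If $b_{11} \neq 0$, then the upper-left entry is a Möbius function of $k$ with the sole pole $k = i/b_{11}$; it tends to $1$ as $|k| \to \infty$ and is uniformly bounded outside any disk around that pole. The lower-left entry $2ikb_{21}/(1+ikb_{11})$ has the same pole and tends to $2b_{21}/b_{11}$ at infinity; hence it, too, is uniformly bounded away from that pole. This proves uniform boundedness of $\mathfrak{S}(\cdot;A,B)$ outside a compact neighbourhood of $i/b_{11}$. Conversely, if $b_{11} = 0$ (so necessarily $b_{21} \neq 0$), the formula collapses to
\[
\mathfrak{S}(k;A,B) \;=\; \begin{bmatrix} -1 & 0 \\ 2ikb_{21} & -1 \end{bmatrix},
\]
which has no pole but whose norm grows like $2|b_{21}|\,|k|$, giving both the linear growth upper bound and a matching lower bound. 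This establishes the "only if" direction and the $\mathcal O(|k|)$ estimate.

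No serious obstacle is anticipated; the only bookkeeping point is to keep the block decomposition aligned with $P^{\perp},P$ so that the scalar entries $b_{11}$ and $b_{21}$ really correspond to $P^{\perp}BP^{\perp}$ and $PBP^{\perp}$ in a basis-free way, and to observe that the apparent off-diagonal blowup when $b_{11}=0$ is genuine rather than a computational artefact.
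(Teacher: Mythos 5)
Your proposal is correct and takes essentially the same route as the paper: pass to the equivalent boundary conditions $A=\mathds{1}$, $B=A^{-1}B$ written in a basis adapted to $\Ran P^{\perp}\oplus \Ran P$ with entries $B_{11}=P^{\perp}BP^{\perp}$, $B_{21}=PBP^{\perp}$, compute $\mathfrak{S}(k;A,B)$ explicitly, and distinguish the cases $B_{11}\neq 0$ (single pole at $k=i/B_{11}$, uniform boundedness away from it) and $B_{11}=0$ (genuine linear growth in $|k|$). One small correction: the fact that $B_{11}$ and $B_{21}$ cannot both vanish follows from the hypothesis $\dim\Ker B=1$ (so $B\neq 0$), not from $\Rank(A\ B)=2$, which holds automatically here because $A$ is invertible.
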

\begin{proof}
	Here, since $\dim \Ker B=1$, $\Ran P^{\perp}=\Span\{p_1 \}$ and $\Ran P=\Span\{p_2 \}$, where $\{p_1,p_2\}$ is an orthonormal basis of $\C^2$. In this basis   
	\begin{align*}
	(A\pm ikB)&= \begin{bmatrix}1 \pm ik B_{11} & 0 \\
	\pm ik B_{21} & 1
	\end{bmatrix}, \quad
	(A\pm ikB)^{-1} =  
	\frac{1}{1 \pm ik B_{11} }  \begin{bmatrix} 1 & 0 \\
	\mp ik B_{21} & 1 \pm ik 1
	\end{bmatrix}.
	\end{align*}
	The Cayley transform is then
	\begin{align*}
	\mathfrak{S}(k;A,B)&= -(A+ikB)^{-1}(A-ikB)
	= -\frac{1}{1 + ik B_{11} } \begin{bmatrix}1 - ik B_{11}  & 0 \\
	-2ik B_{21} & 1
	\end{bmatrix}.
	\end{align*} 
	For $B_{11}\neq 0$ this is uniformly bounded away from the pole $k = i/B_{11}$. For $B_{11}= 0$ there are no poles, and $\dim \Ker B=1 $ implies that $B_{21}\neq 0$. In this case  $\norm{\mathfrak{S}(k;A,B)}=\mathcal{O}(|k|)$ for $|k|\to \infty$. 
\end{proof}

%

\subsection{The case $\dim \Ker A =1$ and $\dim \Ker B =1$.}

In this subsection we focus on the case of $\dim \Ker A =\dim \Ker B =1$. Denote by $Q^{\perp}$ the orthogonal projection on $\Ran A$, $Q=\mathds{1}-Q^{\perp}$, and as before $P$ the  orthogonal projection on $\Ker B$, where each $Q,Q^{\perp}$ and $P,P^{\perp}$  has one-dimensional range.
Then
\begin{align}\label{eq:repQP}
A= \begin{bmatrix} Q^{\perp}AP^{\perp} & Q^{\perp}AP \\
0 & 0
\end{bmatrix}\quad \hbox{and} \quad 
B= \begin{bmatrix} Q^{\perp}BP^{\perp} & 0 \\
Q BP^{\perp} & 0
\end{bmatrix}.
\end{align}

\begin{lemma}
\label{lem:11rr}
	Let $\dim \Ker A =\dim \Ker B =1$. Then  $\Rank (A\; B)=2$ if and only if $Q BP^{\perp}\neq 0$.
	The boundary conditions $A,B$ are irregular if and only if $\Ker A=\Ker B$, i.e. if $Q^{\perp}AP=0$. 
\end{lemma}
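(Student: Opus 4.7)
The plan is to read off the rank condition directly from the block representation \eqref{eq:repQP} and then translate irregularity, as characterised in Remark~\ref{rem:irr}, into a condition on $Q^\perp A P$.

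First I would note that the $2\times 4$ matrix $(A\ B)$ takes the block form
\[
(A\ B)=\begin{bmatrix} Q^\perp A P^\perp & Q^\perp A P & Q^\perp B P^\perp & 0 \\ 0 & 0 & Q B P^\perp & 0 \end{bmatrix}
\]
in the orthonormal bases of $\C^2$ induced by $\{P^\perp,P\}$ and $\{Q^\perp,Q\}$, whose entries are scalars. The only column with a possibly nonzero bottom entry is the third one. If $QBP^\perp=0$ the bottom row vanishes identically and so $\Rank(A\ B)\le 1$; conversely, if $QBP^\perp\neq 0$, then the third column has a nonzero bottom entry, and since $\dim\Ker A=1$ forces $A\neq 0$, at least one of the first two columns has a nonzero top entry and zero bottom entry, so these two columns are linearly independent and $\Rank(A\ B)=2$. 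This settles the first claim.

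For the second claim I would invoke Remark~\ref{rem:irr}: given $\Rank(A\ B)=2$, the boundary conditions $A,B$ are irregular if and only if $\Ker A\cap \Ker B\neq \{0\}$. Since both $\Ker A$ and $\Ker B$ are one-dimensional, this intersection is nontrivial precisely when $\Ker A=\Ker B=\Ran P$, i.e.\ when $AP=0$. Because $QA=0$ holds identically (as $\Ran A=\Ran Q^\perp$), the condition $AP=0$ is equivalent to $Q^\perp A P=0$, proving the stated characterisation.

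There is no real obstacle here: both halves reduce to bookkeeping once one observes that the block representation \eqref{eq:repQP} already encodes $QA=0$ and $BP=0$, so that $Q^\perp A P$ is literally the representation of the restriction $A\restr{}{\Ran P}$ and $QBP^\perp$ of the $Q$-component of $B\restr{}{\Ran P^\perp}$. The only point worth stating explicitly in the write-up is that $B\ne 0$ (because $\dim\Ker B=1$) implies that at least one of $Q^\perp BP^\perp, QBP^\perp$ is nonzero, which is however not needed for the rank argument above.
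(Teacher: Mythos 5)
Your proposal is correct and follows the same route as the paper: reading the rank condition directly off the block representation \eqref{eq:repQP} and reducing irregularity to $\Ker A\cap\Ker B\neq\{0\}$ via Remark~\ref{rem:irr}, then observing that $\Ker A=\Ker B$ is equivalent to $Q^\perp AP=0$. You merely spell out the bookkeeping that the paper's proof leaves implicit, which is fine.
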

\begin{proof}
From \eqref{eq:repQP} one deduces $(A\,B)$ is surjective if and only if $Q BP^{\perp}\neq 0$. Recall that $A,B$ are irregular if and only if $\Ker A = \Ker B$, see \cite[Prop. 3.3]{HKS2015}, and here \eqref{eq:repQP} implies that $\Ker A = \Ker B$ if and only if  $Q^{\perp}AP=0$. 
\end{proof}

\begin{lemma}
\label{lem:11}
	Let $A,B$ define regular boundary conditions with $\dim \Ker A =\dim \Ker B =1$. Then the Cayley transform $\mathfrak{S}(\cdot;A,B)$ has one possible pole, and away from this    
$\mathfrak{S}(\cdot;A,B)$ is uniformly bounded. 
\end{lemma}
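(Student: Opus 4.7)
My plan is to reduce the claim to a single explicit $2\times 2$ matrix computation via the block decomposition~\eqref{eq:repQP}. Since each of the projections $P,P^\perp,Q,Q^\perp$ has one-dimensional range, the blocks $A_{11}:=Q^\perp AP^\perp$, $A_{12}:=Q^\perp AP$, $B_{11}:=Q^\perp BP^\perp$, $B_{21}:=QBP^\perp$ are scalars, and in the corresponding orthonormal basis
\[
A=\begin{bmatrix}A_{11}&A_{12}\\ 0&0\end{bmatrix},\qquad B=\begin{bmatrix}B_{11}&0\\ B_{21}&0\end{bmatrix}.
\]
By Lemma~\ref{lem:11rr}, regularity forces $A_{12}\neq 0$ and the rank condition forces $B_{21}\neq 0$, while $A_{11}$ and $B_{11}$ remain unconstrained.

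The determinant is immediate from the vanishing $(2,2)$-entry:
\[
\det(A+ikB)=-ikA_{12}B_{21},
\]
which vanishes precisely at $k=0$. Hence $k=0$ is the unique candidate pole of $\mathfrak{S}(\cdot;A,B)$, which already establishes the first half of the claim.

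The core of the argument is then a direct inversion of $A+ikB$ by Cramer's rule followed by the matrix product with $A-ikB$. The outcome I expect—and what would make the lemma clean—is that all $k$-dependence cancels, leaving the constant matrix
\[
\mathfrak{S}(k;A,B)=\begin{bmatrix}1&0\\ -2A_{11}/A_{12}&-1\end{bmatrix},\qquad k\in\C\setminus\{0\}.
\]
In particular $\mathfrak{S}(\cdot;A,B)$ extends continuously through $k=0$, is then entire in $k$, and is trivially uniformly bounded on $\C$; the nominal pole at the origin is removable.

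The one step that is not purely mechanical is the evaluation of the $(2,1)$-entry of $(A+ikB)^{-1}(A-ikB)$: there the factor $-ikB_{21}$ coming from $(A-ikB)_{21}$ must cancel against the $1/(-ikB_{21})$ sitting in $(A+ikB)^{-1}_{22}$, and the resulting two summands, of the form $(A_{11}\mp ikB_{11})/A_{12}$, must combine so that the $B_{11}$- and $k$-dependent pieces disappear and only $2A_{11}/A_{12}$ survives. This double cancellation is the actual content of the lemma—the Cayley transform here is not merely bounded but constant in $k$—and it is the one point I would flag in a careful write-up; the remainder of the proof is bookkeeping around the cofactor formula.
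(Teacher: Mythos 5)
Your proposal is correct and follows essentially the same route as the paper's own proof: reduce via the block representation \eqref{eq:repQP} and Lemma~\ref{lem:11rr} to scalar blocks with $A_{12}\neq 0$ and $B_{21}\neq 0$, observe $\det(A+ikB)=-ik\,A_{12}B_{21}$, and invert explicitly. The cancellation you flag does go through exactly as you describe: carrying out the product gives
\[
\mathfrak{S}(k;A,B)=-(A+ikB)^{-1}(A-ikB)=\begin{bmatrix}1&0\\ -2A_{11}/A_{12}&-1\end{bmatrix},
\]
constant in $k$, so the only candidate singularity $k=0$ is removable and the lemma follows a fortiori. Be aware, though, that your formula is stronger than -- and at variance with -- the one displayed in the paper's proof: there $(A\pm ikB)$ is written with the entry $\pm ik$ in the $(2,2)$ rather than the $(2,1)$ position, i.e.\ as if $B=\begin{bsmallmatrix}B_{11}&0\\0&1\end{bsmallmatrix}$, which is not the case under consideration; this slip produces a $k$-dependent Cayley transform with a spurious pole at $k=iA_{11}/B_{11}$. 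Your computation is the correct one for the normalization $A=\begin{bsmallmatrix}A_{11}&A_{12}\\0&0\end{bsmallmatrix}$, $B=\begin{bsmallmatrix}B_{11}&0\\B_{21}&0\end{bsmallmatrix}$, and the conclusion of the lemma (one possible pole, uniform boundedness away from it) is unaffected.
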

\begin{proof}
	
	Note that $\Ran P^{\perp}=\Span\{p_1 \}$ and $\Ran P=\Span\{p_2 \}$, where $\{p_1,p_2\}$ is an orthonormal basis of $\C^2$. For  $A,B$, regular, in this basis, 
	equivalent boundary conditions are
	\begin{align*}
	A= \begin{bmatrix} A_{11} & 1 \\
	0 & 0
	\end{bmatrix}\quad \hbox{and} \quad 
	B= \begin{bmatrix} B_{11} & 0 \\
	1 & 0
	\end{bmatrix}
	\end{align*}
	since

	Let $\Ran P=\Span\{p_1 \}$, $\Ran P^\perp=\Span\{p_2 \}$, and $\Ran Q=\Span\{q_1 \}$, $\Ran Q^\perp=\Span\{q_2 \}$ where $\{p_1,p_2\}$ and $\{q_1,q_2\}$  are orthonormal basis of $\C^2$. Now, a coordinate change from $\{q_1,q_2\}$ to $\{p_1,p_2\}$ is given by a unitary $U$, and hence equivalent boundary conditions $UA$ and $UB$ can be written in the basis  $\{p_1,p_2\}$ as
	\begin{align*}
	A= \begin{bmatrix} A_{11} & A_{12} \\
	0 & 0
	\end{bmatrix}\quad \hbox{and} \quad 
	B= \begin{bmatrix} B_{11} & 0 \\
	B_{21} & 0
	\end{bmatrix}.
	\end{align*}
	By Lemma~\ref{lem:11rr}, one has $Q BP^{\perp}\neq 0$ and $Q^{\perp}AP\neq 0$ and hence $B_{21}\neq 0$ and $A_{12}\neq 0$. Therefore equivalent boundary conditions are 
		\begin{align*}
		A= \begin{bmatrix} A_{11} & 1 \\
		0 & 0
		\end{bmatrix}\quad \hbox{and} \quad 
		B= \begin{bmatrix} B_{11} & 0 \\
		1 & 0
		\end{bmatrix}.
		\end{align*}
	Hence
	\begin{align*}
	(A\pm ikB)= \begin{bmatrix}A \pm ik B_{11} & 1 \\
	0 & \pm ik
	\end{bmatrix}, 
	(A\pm ikB)^{-1} =  
	\frac{1}{\pm ik(A_{11} \pm ik B_{11})}  \begin{bmatrix} \pm ik  & -1 \\
	0 & A_{11} \pm ik B_{11}
	\end{bmatrix},
	\end{align*}
and
	\begin{align*}
\mathfrak{S}(k;A,B)
	= \begin{bmatrix}-\frac{A_{11}-ikB_{11}}{A_{11}+ikB_{11}}  & \frac{-2}{A_{11}+ikB_{11}}  \\
	0 & -1
	\end{bmatrix}.
	\end{align*} 
This is uniformly bounded away from the only possible pole at $k=ik/B_{11}$.
\end{proof}

Our findings are summarized in Table~\ref{fig:cayley}, where as before  $P$ is the orthogonal projection onto $\Ker B$ and $P^{\perp}=\mathds{1}-P$, and uniformly bounded refers to the Cayley transform away from its poles. 
	
\begin{table}[H]
	\begin{tabular}{c c c c c}
		$\dim \Ker A$ & $\dim \Ker B$ & Condition & Cayley transform & Ref. \\ \hline
		& & & &\\
		0 & 0 & none & uniformly bounded & Lemma~\ref{lem:msec}\\
		1 & 0 & none &  uniformly bounded & Lemma~\ref{lem:msec} \\
		2 & 0 & none & uniformly bounded & Lemma~\ref{lem:msec} \\
		0 & 2 & none &  uniformly bounded & Lemma~\ref{lem:msec} \\
		0 & 1 & $P^\perp B P^\perp\neq 0$ & uniformly bounded  & Lemma~\ref{lem:Ok}  \\
		0 & 1 & $P^\perp B P^\perp= 0$ & $\norm{\mathfrak{S}(k;A,B)}=\mathcal{O}(|k|)$  & Lemma~\ref{lem:Ok}  \\
        1 & 1 & $\Ker A\neq \Ker B$ &  uniformly bounded  & Lemma~\ref{lem:11} \\
         1 & 1 & $\Ker A= \Ker B$ &  $\mathfrak{S}(k;A,B)=\infty$   & Lemma~\ref{lem:11} \\
         & & & $A,B$ irregular & \\[1pt]
	\end{tabular}
	\caption{Cayley transforms}\label{fig:cayley}
\end{table}

\section{Resolvent estimates}\label{sec:caybdd}


The keystone of our analysis is that for regular boundary conditions the resolvent $(-\Delta(A,B)-k^2)^{-1}$ is an integral operator, i.e.,
\begin{align*}
(-\Delta(A,B)-k^2)^{-1}f(x)=\int_{[0,\infty)\times \{1,2\}} r_{A,B}(x,y;k)f(y)\ dy,\qquad 
\end{align*}
where
\begin{align*}
f=\begin{bsmallmatrix}f_1\\f_2\end{bsmallmatrix}\in L^2\big([0,\infty)\big)\oplus L^2\big([0,\infty)\big),\ x=\begin{bsmallmatrix}x_1\\ x_2\end{bsmallmatrix}, y=\begin{bsmallmatrix}y_1\\ y_2\end{bsmallmatrix}\in [0,\infty)\times \{1,2\},
\end{align*}
 with kernel 
\begin{multline}\label{eq:res_infty}
r_{A,B}(x,y;k) \\= \frac{i}{2k}\left\{\begin{bmatrix}
e^{ik|x_1-y_1|} & 0 \\ 0 & e^{ik|x_2-y_2|}
\end{bmatrix}
+
\begin{bmatrix}
e^{ikx_1} & 0 \\ 0 & e^{ikx_2}
\end{bmatrix}
\mathfrak{S}(k;A,B)
\begin{bmatrix}
e^{iky_1} & 0 \\ 0 & e^{iky_2}
\end{bmatrix}
\right\}.
\end{multline}
whenever $k\in \C$ such that $\hbox{Im } k>0$ and $A+ik B$ is invertible, cf. \cite[Prop. 4.7]{HKS2015}.  We stress that the first addend on the right hand side corresponds to the kernel of the Laplacian on $\R$ without any point interactions; the second addend can be thus interpreted as a correcting term that mirrors the influence of the point interactions. It is also remarkable that the kernel is bounded and jointly uniformly continuous on $\R\times \R$, regardless of $A,B$; in particular, it extends to a bounded linear operator from $L^1$ to $L^\infty$.

\begin{lemma}[Estimate for uniformly bounded Cayley transform]
\label{lem:resbdd}
	Let the boundary conditions $A,B$ be regular and such that $k \mapsto \mathfrak{S}(k;A,B)$ is uniformly bounded away from its poles. Then there exists  $C>0$ such that
	\begin{equation}\label{eq:analyt}
	\norm{(-\Delta(A,B)-k^2)^{-1}} \leq \frac{1}{\dist(k^2, [0,\infty))} +  \frac{C}{|k||\Im k| \dist(S,k)},
	\end{equation} 
	where $\Im k>0$ with $\det (A+ik B)\neq 0$ and 
	 $$S=\{s\in \C\colon \Im s>0 \hbox{ or } s \in [0,\infty), \hbox{ and }  s \hbox{ non-removable singularity of } \mathfrak{S}(k;A,B) \}.$$
%
%
%
%
%
\end{lemma}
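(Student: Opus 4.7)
The plan is to split the resolvent into a ``free'' piece and a Cayley-transform correction, using the explicit kernel~\eqref{eq:res_infty}. Concretely, I would write
\begin{align*}
(-\Delta(A,B)-k^2)^{-1}=T_1(k)+T_2(k),
\end{align*}
where $T_1(k)$ is the integral operator whose kernel is the diagonal matrix $\tfrac{i}{2k}\diag(e^{ik|x_1-y_1|},e^{ik|x_2-y_2|})$ and $T_2(k)$ is the remainder, i.e.\ the term involving $\mathfrak{S}(k;A,B)$. The two pieces will be estimated independently, and the bound of the lemma will follow by the triangle inequality.

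First, I would recognize $T_1(k)$ as (two copies of) the compression to $[0,\infty)$ of the resolvent of the self-adjoint Laplacian $\Delta_\R$ on $L^2(\R)$: writing $J\colon L^2([0,\infty))\hookrightarrow L^2(\R)$ for extension by zero, $T_1(k)f=J^*(-\Delta_\R-k^2)^{-1}Jf$ holds in each component, and the spectral theorem together with $\sigma(-\Delta_\R)=[0,\infty)$ gives $\norm{(-\Delta_\R-k^2)^{-1}}=\dist(k^2,[0,\infty))^{-1}$. Since the compression by $J$, $J^*$ cannot increase the norm, this yields $\norm{T_1(k)}\le \dist(k^2,[0,\infty))^{-1}$, accounting for the first summand in the claimed bound.

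Second, I would bound $T_2(k)$ by exploiting its rank-two structure. Set
$M(k)\colon L^2([0,\infty))\oplus L^2([0,\infty))\to \C^2$, $(M(k)f)_j:=\int_0^\infty e^{iky}f_j(y)\,dy$, and $N(k)\colon \C^2\to L^2([0,\infty))\oplus L^2([0,\infty))$, $(N(k)c)_j(x):=e^{ikx}c_j$, so that $T_2(k)=\tfrac{i}{2k}\,N(k)\,\mathfrak{S}(k;A,B)\,M(k)$. A one-line Cauchy--Schwarz argument based on $\norm{e^{ik\cdot}}_{L^2([0,\infty))}=(2\Im k)^{-1/2}$ gives $\norm{M(k)},\norm{N(k)}\le (2\Im k)^{-1/2}$, and therefore
\begin{align*}
\norm{T_2(k)}\le \frac{\norm{\mathfrak{S}(k;A,B)}}{4|k|\,\Im k}.
\end{align*}

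The remaining—and technically most delicate—step is to convert the qualitative boundedness of $\mathfrak{S}(k;A,B)$ away from its poles, obtained in Section~\ref{sec:msect}, into the quantitative estimate $\norm{\mathfrak{S}(k;A,B)}\,\dist(k,S)\le C_0$. Since the explicit block formulae worked out in Lemmas~\ref{lem:msec},~\ref{lem:Ok}, and~\ref{lem:11} show that $\mathfrak{S}(\,\cdot\,;A,B)$ is matrix-valued rational with only finitely many simple poles (exactly the points of $S$), a partial-fraction argument furnishes such a $C_0$ uniformly on $\{\Im k>0\}\setminus S$; plugging this into the bound for $\norm{T_2(k)}$ and adding the bound for $\norm{T_1(k)}$ completes the proof. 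The obstacle that requires the most care is precisely this case-by-case verification following the dichotomy of Table~\ref{fig:cayley}, since one must exclude higher-order poles and pin down a uniform proportionality constant across all regular configurations.
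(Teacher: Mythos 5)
Your proposal is essentially the paper's own proof: the authors likewise split the resolvent kernel \eqref{eq:res_infty} into the free part (bounded via the standard resolvent estimate for the Laplacian on $\R$ without point interactions) and the Cayley-transform correction, estimate the latter through its product structure using $\norm{e^{ik\cdot}}^2=1/(2\Im k)$, and then invoke the fact that the non-removable singularities of $\mathfrak{S}(\cdot;A,B)$ are first-order poles to obtain $\norm{\mathfrak{S}(k;A,B)}\le C/\dist(S,k)$. Your compression argument $J^*(-\Delta_\R-k^2)^{-1}J$ for the free term and the explicit rank-two factorization are just slightly more detailed write-ups of the same steps, so the approach and conclusion coincide.
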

\begin{proof}
By using \eqref{eq:res_infty} we obtain outsides the poles of $\mathfrak{S}(k;A,B)$ the estimate
	\begin{align*}
	\norm{(-\Delta(A,B)-k^2)^{-1}f} \leq \frac{1}{|k|^2}\norm{f} + \frac{1}{|k|}\norm{\mathfrak{S}(k;A,B)} \cdot \norm{f} \cdot\norm{e^{ik\cdot}}^2.
	\end{align*}
The first term follows from the standard resolvent estimate for the Laplacian on $\R$ with no point interactions, while for the second one we have used the product form of the kernel, and moreover  $\norm{e^{ik\cdot}}^2=1 / (2\hbox{Im }k)$. Note that non-removable singularities of $\mathfrak{S}(k;A,B)$ are poles of order one and hence, $\norm{\mathfrak{S}(k;A,B)}\leq C \dist(S,k)$.
\end{proof}	

In the case of $m$-sectorial boundary conditions, a stronger estimate holds.
\begin{lemma}[Resolvent estimate for m-sectorial boundary conditions]\label{lem:cosine}
Let the boundary conditions $A,B$ be $m$-sectorial. Then there exist $C>0$ and $\omega \ge 0$ such that
\begin{equation}\label{eq:cosine}
\|\lambda(\Delta(A,B)-\lambda^2)^{-1}\| \le \frac{C}{\Re \lambda -\omega}
\end{equation}
all $\lambda\in \C$ with $\Re\lambda >\omega$;
in particular, the spectrum of $\Delta(A,B)$ is contained in a parabola centered on the real axis and contained in a left half-plane.
\end{lemma}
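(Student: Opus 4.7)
My approach is to argue directly from the explicit kernel~\eqref{eq:res_infty} via the substitution $k=i\lambda$, which sends $\{\Re\lambda>0\}$ bijectively onto $\{\Im k>0\}$ and identifies $(\Delta(A,B)-\lambda^2)^{-1}$ with $-(-\Delta(A,B)-k^2)^{-1}$. The kernel then splits into the kernel $\tfrac{1}{2\lambda}e^{-\lambda|x-y|}$ of the free Laplacian's resolvent on $\R$ (restricted blockwise to the two half-lines), plus a rank-at-most-two correction with prefactor $\mathfrak{S}(i\lambda;A,B)$; I handle the two summands separately.

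For the free part, I would work in Fourier variables: the multiplier of $\lambda(-\Delta_\R+\lambda^2)^{-1}$ is $\lambda/(\xi^2+\lambda^2)$, and a short case analysis on the sign of $\Re(\lambda^2)$ yields
\[
\sup_{\xi\in\R}\frac{|\lambda|}{|\xi^2+\lambda^2|}\le \frac{1}{\Re\lambda},
\]
where the critical regime $|\Im\lambda|>\Re\lambda$ is handled by inserting the minimizer $\xi^2=(\Im\lambda)^2-(\Re\lambda)^2$. Since extending an $L^2([0,\infty))$-function by zero and then restricting the output cannot increase the operator norm, this piece contributes at most $1/\Re\lambda$ to $\|\lambda(\Delta(A,B)-\lambda^2)^{-1}\|$.

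For the correction, I would exploit its product structure: writing $E(x;i\lambda)=\diag(e^{-\lambda x_1},e^{-\lambda x_2})$ and using $\|e^{-\lambda\cdot}\|^2_{L^2([0,\infty))}=1/(2\Re\lambda)$ gives
\[
\|T(i\lambda)\|\le \frac{\|\mathfrak{S}(i\lambda;A,B)\|}{4|\lambda|\,\Re\lambda}.
\]
By Lemma~\ref{lem:msec}, $\|\mathfrak{S}(k;A,B)\|$ is bounded by a constant $C_0$ outside a compact set in the $k$-plane; choosing $\omega$ so large that $\{\Re\lambda>\omega\}$ maps into the complement of that compact set therefore yields $\|\lambda T(i\lambda)\|\le C_0/(4\,\Re\lambda)$. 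Summing the two contributions and using $1/\Re\lambda\le 1/(\Re\lambda-\omega)$ delivers~\eqref{eq:cosine}. The parabolic-confinement statement follows automatically: the image of $\{\Re\lambda>\omega\}$ under $\lambda\mapsto\lambda^2$ is precisely the exterior of the leftward-opening parabola with vertex $\omega^2$ on the real axis, and this region lies in $\rho(\Delta(A,B))$, trapping $\sigma(\Delta(A,B))$ inside the parabola and hence in the half-plane $\{\Re z\le\omega^2\}$.

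The delicate step is really the Fourier-multiplier bound: the na\"\i ve estimate $|\xi^2+\lambda^2|\ge|\Im\lambda^2|$ yields only $|\lambda|/(2\Re\lambda\,|\Im\lambda|)$, which is not uniformly controlled by $1/\Re\lambda$ when $|\Im\lambda|\gg\Re\lambda$. Identifying the true minimizer $\xi^2=(\Im\lambda)^2-(\Re\lambda)^2$ in that regime is what restores the clean $1/\Re\lambda$ growth and makes the cosine-type resolvent bound go through; everything else is routine bookkeeping.
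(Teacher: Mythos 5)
Your proof is correct, but it takes a genuinely different route from the paper. The paper argues variationally: it invokes Lions' theorem for the form $\delta_{P,L}$ of \eqref{eq:form} (bounded, elliptic, and satisfying $|\Im a(u,u)|\le M\|u\|_V\|u\|_H$, checked as in~\cite[Lemma~6.63]{Mug_book}), which yields that $\Delta(A,B)$ generates a cosine operator function with Kisy\'nski space $V$, and the estimate \eqref{eq:cosine} then comes as a by-product. You instead work directly from the explicit kernel \eqref{eq:res_infty} with $k=i\lambda$: the free block is handled by the Fourier-multiplier bound $\sup_\xi|\lambda|/|\xi^2+\lambda^2|\le 1/\Re\lambda$ (your case analysis — minimizer $\xi=0$ when $\Re(\lambda^2)\ge0$, $\xi^2=(\Im\lambda)^2-(\Re\lambda)^2$ otherwise, together with $|\lambda|\le\sqrt2\,|\Im\lambda|$ in the latter case — is correct), and the correction by its rank-$\le2$ product structure, $\|e^{-\lambda\cdot}\|_{L^2([0,\infty))}^2=1/(2\Re\lambda)$, and the uniform boundedness of $\mathfrak{S}$ outside a compact set from Lemma~\ref{lem:msec}, with $\omega$ chosen so large that $\{\Re\lambda>\omega\}$ avoids that compact set (which also guarantees invertibility of $A-\lambda B$ there); the parabola statement then follows from computing the image of $\{\Re\lambda>\omega\}$ under $\lambda\mapsto\lambda^2$, exactly as you say. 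What each approach buys: yours uses only boundedness of the Cayley transform, so it proves \eqref{eq:cosine} verbatim for \emph{all} boundary conditions for which $\Delta(A,B)$ is a generator, not just m-sectorial ones — consistent with the paper's remark after Theorem~\ref{thm:generator} that the spectrum always lies in a parabola; the paper's proof, on the other hand, delivers the strictly stronger conclusion (cosine operator function, hence analytic semigroup of angle $\pi/2$ and quasi-contractivity) on which Theorem~\ref{thm:generator}(e) actually relies, and which your resolvent estimate alone does not imply. One small slip in your closing commentary: the na\"ive bound $|\xi^2+\lambda^2|\ge|\Im(\lambda^2)|$ fails to be controlled by $1/\Re\lambda$ in the regime $|\Im\lambda|\ll\Re\lambda$ (not $\gg$), where the correct choice is $\xi=0$; in the regime $|\Im\lambda|>\Re\lambda$ the true minimum coincides with $|\Im(\lambda^2)|=2\Re\lambda\,|\Im\lambda|$. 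This does not affect the validity of your estimate.
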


\begin{proof}
The proof is based on a result due to Lions: If a bounded, $H$-elliptic sesquilinear form $a$ with form domain $V$ satisfies the additional condition
\begin{equation}\label{eq:lions}
|\Im a(u,u)|\le M\|u\|_V \|u\|_H\qquad \hbox{for all }u\in V,
\end{equation}
then the associated operator $A$ generates a cosine operator function on $H$ with associated Kisy{\'n}ski space $V$ and its resolvent satisfies an estimate corresponding to~\eqref{eq:cosine}, see e.g.~\cite[\S~3.14]{AreBatHie01} and~\cite[\S~6.2]{Mug_book}. Hence, it  suffices to observe that~\eqref{eq:lions} is satisfied by the form $\delta_{A,B}$ defined in~\eqref{eq:form}: the proof of this fact is analogous to that of~\cite[Lemma~6.63]{Mug_book}.
\end{proof}

The only cases where the Cayley transform is not uniformly bounded have been discussed in Lemma~\ref{lem:Ok}.
\begin{lemma}[Estimate for the other cases]\label{lem:resbad}
	Let $\dim \Ker A =0$ and $\dim \Ker B =1$ and let $P^{\perp}BP^{\perp}= 0$. Then $\sigma_p(-\Delta(A,B))=\emptyset$ and for some $c>0$
		\begin{align*}
		\norm{(-\Delta(A,B)+\kappa^2)^{-1}} \geq \frac{c}{\kappa^{3/2}} \quad \hbox{as} \quad \kappa\to \infty.
		\end{align*} 
		In particular $\Delta(A,B)$ is not a generator of a $C_0$-semigroup. 
\end{lemma}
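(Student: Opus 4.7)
The plan is to combine the explicit resolvent kernel \eqref{eq:res_infty} with the refined structure of the Cayley transform furnished by Lemma \ref{lem:Ok}. Indeed, the proof of that lemma shows that, if one lets $u_1\in\Ran P^{\perp}$ and $u_2\in\Ran P$ be unit vectors spanning $\C^2$, then intrinsically
\begin{align*}
\mathfrak{S}(k;A,B) \;=\; -I \,+\, 2ik\,B_{21}\, u_2 u_1^*,\qquad B_{21}\ne 0,
\end{align*}
i.e. a rank-one perturbation of $-I$ that grows linearly in $k$. The key phenomenon is that, once substituted into \eqref{eq:res_infty}, this linear growth is exactly cancelled by the $\frac{i}{2k}$-prefactor, producing a non-vanishing rank-one term in the resolvent kernel.

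To handle the point spectrum, I would invoke Theorem \ref{thm:generator}(a): $k^2\in\sigma_p(-\Delta(A,B))$ with $\Im k>0$ is equivalent to $\det(A+ikB)=0$, hence to $\det(I+ikA^{-1}B)=0$ since $A$ is invertible. Using $BP=0$ (by definition of $P$) together with the standing hypothesis $P^{\perp}A^{-1}BP^{\perp}=0$, one checks that $A^{-1}B$ is zero on $\Ran P$ and sends $\Ran P^{\perp}$ into $\Ran P$; hence $(A^{-1}B)^2=0$ and $\det(I+ikA^{-1}B)\equiv 1$. This rules out eigenvalues.

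For the resolvent estimate, set $k=i\kappa$ with $\kappa>0$ large and split the kernel as $r_{A,B}=r_{\mathrm{good}}+r_{\mathrm{bad}}$, where $r_{\mathrm{good}}$ collects the $\delta_{ij}$-contributions (the standard Dirichlet kernel $\frac{1}{2\kappa}(e^{-\kappa|x_i-y_i|}-e^{-\kappa(x_i+y_i)})$ on each half-line, for which $\|R_{\mathrm{good}}\|=\mathcal O(\kappa^{-2})$) and
\begin{align*}
r_{\mathrm{bad}}(x,y;i\kappa)_{ij} \;=\; -B_{21}\, u_2^{(i)}\,\overline{u_1^{(j)}}\, e^{-\kappa(x_i+y_j)}.
\end{align*}
The operator $R_{\mathrm{bad}}$ is then the rank-one operator $f\mapsto -B_{21}\langle v_\kappa,f\rangle_{L^2}\, w_\kappa$ with $w_\kappa(x)_i=u_2^{(i)}e^{-\kappa x_i}$ and $v_\kappa(y)_j=u_1^{(j)}e^{-\kappa y_j}$, whose norm equals $|B_{21}|\,\|w_\kappa\|_{L^2}\|v_\kappa\|_{L^2}=|B_{21}|/(2\kappa)$. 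Combining the two estimates gives, for $\kappa$ large enough,
\begin{align*}
\|(-\Delta(A,B)+\kappa^2)^{-1}\| \;\ge\; \|R_{\mathrm{bad}}\|-\|R_{\mathrm{good}}\| \;\ge\; \frac{|B_{21}|}{2\kappa} - \frac{C}{\kappa^2} \;\ge\; \frac{c}{\kappa^{3/2}}.
\end{align*}
This is incompatible with the Hille--Yosida estimate $\|(\kappa^2-\Delta(A,B))^{-1}\|=\mathcal O(\kappa^{-2})$ that a $C_0$-semigroup generator would have to satisfy on the positive real axis, so $\Delta(A,B)$ fails to be a generator.

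The only minor obstacle is a bookkeeping one: \eqref{eq:res_infty} is written in the edge basis, while the block decomposition of $\mathfrak{S}$ in Lemma \ref{lem:Ok} is performed in the $P$-eigenbasis. This is resolved by noting that the singular summand $u_2 u_1^{*}$ is an intrinsic rank-one operator whose matrix entries in \emph{any} basis are $u_2^{(i)}\overline{u_1^{(j)}}$; no further change-of-basis manipulation of the kernel is then needed, and the lower bound on $\|R_{\mathrm{bad}}\|$ follows by an elementary $L^2$ computation.
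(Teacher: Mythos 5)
Your proposal is correct, and it rests on the same two pillars as the paper's own proof -- the explicit kernel \eqref{eq:res_infty} combined with the linearly growing Cayley transform identified in Lemma~\ref{lem:Ok}, concluded by contradiction with the Hille--Yosida resolvent bound -- but the lower bound is extracted differently. The paper tests the resolvent at $k=i\kappa$ against the fixed function $u=(\chi_{[0,1]},0)^T$ and estimates only the second component, which yields precisely the stated $c\,\kappa^{-3/2}$; you instead split the kernel into the block-diagonal Dirichlet part and the rank-one singular part, observe that the prefactor $\tfrac{i}{2k}$ exactly cancels the linear growth of $\mathfrak{S}$, compute the norm of the resulting rank-one operator exactly, $\norm{R_{\mathrm{bad}}}=|B_{21}|/(2\kappa)$, and remove the $\mathcal{O}(\kappa^{-2})$ good part by the triangle inequality. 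This gives the sharper lower bound $c/\kappa$ (a fortiori $c/\kappa^{3/2}$) and arguably makes more transparent why generation fails: the point-interaction correction decays only like the first power of $\kappa^2-\Delta$'s spectral parameter. Your basis bookkeeping is sound: after normalizing $A=\mathds{1}$, the hypothesis gives $A^{-1}B=B_{21}\,p_2p_1^*$ with $p_1\in\Ran P^{\perp}$, $p_2\in\Ran P$, so $(A^{-1}B)^2=0$ and $\mathfrak{S}(k;A,B)=-\mathds{1}+2ikB_{21}\,p_2p_1^*$ holds as an operator identity whose entries in the edge basis are exactly the ones you insert into \eqref{eq:res_infty}; likewise $\det(A+ikB)=\det(\mathds{1}+ikA^{-1}B)\equiv 1$ rules out eigenvalues $k^2$ with $\Im k>0$, which is the same mechanism as the paper's remark that the equivalent $\mathfrak{S}(k;A',B')$ has no poles. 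The only point that both your argument and the paper's pass over in silence is the absence of eigenvalues embedded in $[0,\infty)$, which follows because $-\psi''=k^2\psi$ has no nontrivial $L^2$ solutions on the half-line for real $k$; one sentence to that effect would make the claim $\sigma_p(-\Delta(A,B))=\emptyset$ completely airtight.
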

\begin{proof}
 As in \cite[Section 6]{HKS2015} one can show that $\Delta(A,B)$ is unitarily equivalent to  $\Delta(A',B')$ with
	\begin{align*}
	\mathfrak{S}(k;A',B')
	=  \begin{bmatrix}-1   & 0 \\
	2ik B_{21} & 1
	\end{bmatrix}, \quad B_{21}\neq 0,
	\end{align*}
	and there are no eigenvalues nor poles of $\mathfrak{S}(k;A',B')$.
	Consider the function $u=(u_1,u_2)^T=(\chi_{[0,1]},0)^T$, where $\chi_{[0,1]}$ denotes the characteristic function of the unit interval. Then 
	 \begin{multline}\label{eq:res_infty-2}
	 (-\Delta(A,B)-k^2)^{-1}u = 
	  \frac{i}{2k}\left\{\begin{bmatrix}
	 \int_0^{1} e^{ik|x_1-y_1|} dy_1 \\ 0 \end{bmatrix} 
	 +
	 \begin{bmatrix}
	 e^{ikx_1} \\ 2ik B_{12}   e^{ikx_2}
	 \end{bmatrix}\int_0^1 e^{iky_1} dy_1
	 \right\}, 
	 \end{multline}
	 and estimating the second component only
	 \begin{align*}
	 \norm{(-\Delta(A,B)-k^2)^{-1}u} \geq  
	   |B_{12}|\cdot \abs{\int_0^1 e^{iky_1} dy_1}\cdot  \norm{ e^{ikx_2}} = |B_{12}| \frac{|e^{ik}-1|}{|k|} \frac{1}{(2\Im k)^{1/2}}. 
	 \end{align*}
	 In particular for $k= i\kappa$, $\kappa>0$,
	 \begin{align*}
 \frac{|e^{-\kappa}-1|}{\sqrt{2}|\kappa|^{3/2}}\to \frac{1}{\sqrt{2}|\kappa|^{3/2}}   \quad \hbox{as} \quad \kappa \to \infty, 
	 \end{align*}
	  and therefore
	   \begin{align*}
	   \norm{(\Delta(A,B)-\kappa^2)^{-1}} \geq \mathcal{O}(|\kappa|^{-3/2}).
	   \end{align*}
	   In particular, assume that $\Delta(A,B)$ is the generator of a $C_0$-semigroup, then 
	   \begin{align*}
	    \mathcal{O}(|\kappa|^{-3/2}) \leq \norm{\Delta(A,B)-\kappa^2)^{-1}} \leq M/(\kappa^2-\omega), \quad \hbox{for } \omega >0 \hbox{ and } M>0,
	   \end{align*}
	   which multiplying by $\kappa^2$ and passing to the limit $\kappa\to \infty$ leads to a contradiction. Recall that $\Delta(A,B)$ is closed and densely defined. 
\end{proof}

\section{Invariance properties}\label{sec:invariance}

Several issues in the qualitative analysis of semigroups associated with sesquilinear forms are made particularly easy by variational methods. In particular,  the classical Beurling--Deny criteria have been generalized in~\cite{Ouh96}; based upon this general criterium, invariance properties for heat equations on metric graphs have been obtained in~\cite{CarMug09}. We can paraphrase~\cite[Prop.~4.3]{CarMug09} (see also~\cite[Thms.~6.71 and 6.72]{Mug_book} and obtain the following: given a closed convex subset $C$ of $\C^2$, we denote by $\mathcal C$ the induced closed convex subset of $L^2\big([0,\infty)\big)\oplus L^2\big([0,\infty)\big)$ defined by
\[
\mathcal C:=\{f\in L^2\big([0,\infty)\big)\oplus L^2\big([0,\infty)\big):f(x)\in C\hbox{ for a.e.}x\in [0,\infty)\dot{\cup} [0,\infty) \}
\]
 
\begin{proposition}\label{prop:invar-m-sect}
Let the boundary conditions $A,B$ be m-sectorial. Let $C$ be a closed convex subset of $\C^2$ with $(0,0)\in C$. 

Then the semigroup generated by $\Delta(A,B)$ leaves $\mathcal C$ invariant if and only if both the projection onto $\Ran B$ and the semigroup generated by $A+B-\mathds{1}$  leave $C$ invariant.
\end{proposition}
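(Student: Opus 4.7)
The plan is to apply Ouhabaz's invariance criterion~\cite{Ouh96} directly to the closed sectorial form $\delta_{P,L}$ introduced in~\eqref{eq:form}, whose associated operator is $-\Delta(A,B)$ in the m-sectorial case. The criterion states that the semigroup leaves $\mathcal C$ invariant if and only if (i) the projection $P_{\mathcal C}$ onto $\mathcal C$ maps the form domain $V=\{\psi \in H^1([0,\infty))\oplus H^1([0,\infty)) : P\underline\psi=0\}$ into itself, and (ii) $\Re\, \delta_{P,L}[P_{\mathcal C} u,\, u - P_{\mathcal C} u]\ge 0$ for every $u\in V$. Because $0\in C$, the pointwise projection $P_{\mathcal C}$ is Lipschitz and sends $0$ to $0$, so it automatically maps $H^1\oplus H^1$ into itself; thus (i) reduces to the requirement $P_C(\Ran P^\perp)\subset \Ran P^\perp$.

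The first step is then to establish the equivalence $P_C(\Ran P^\perp)\subset\Ran P^\perp \Leftrightarrow P^\perp C \subset C$. For ``$\Leftarrow$'', given $v\in\Ran P^\perp$ and $c_0 = P_C v$, the hypothesis yields $P^\perp c_0\in C$, and $\|v - P^\perp c_0\|^2 = \|P^\perp(v-c_0)\|^2 \leq \|v-c_0\|^2$, so by uniqueness of the metric projection $c_0 = P^\perp c_0\in \Ran P^\perp$. For ``$\Rightarrow$'', given $c\in C$ set $v = P^\perp c$ and $c_0 = P_C v \in \Ran P^\perp$; the variational inequality $\Re\langle v-c_0, c-c_0\rangle\le 0$ combined with $\langle v-c_0, Pc\rangle = 0$ (both factors being in different orthogonal components) gives $\|v-c_0\|^2\le 0$, hence $P^\perp c = v = c_0 \in C$.

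Under condition (i), using $P^\perp P_C\underline u = P_C\underline u$ and $P^\perp\underline u = \underline u$, the form splits as
\[
\Re\,\delta_{P,L}[P_{\mathcal C} u, u - P_{\mathcal C} u] = \Re\langle (P_{\mathcal C} u)',(u-P_{\mathcal C} u)'\rangle_{L^2} - \Re\langle L P_C\underline u,\, \underline u - P_C\underline u\rangle_{\C^2}.
\]
The Sobolev contribution is always nonnegative by the standard pointwise Beurling--Deny calculation (Lipschitz continuity of $P_C$ together with the contractivity relation $\Re\langle P_C w - P_C w', w - w' - (P_C w - P_C w')\rangle \ge 0$). To disentangle the boundary term I would use a localization: testing with the family $u_n(x)=\phi(x/n)\underline c$ for a cutoff $\phi \in C_c^\infty([0,\infty))$ with $\phi(0)=1$ and any $\underline c\in\Ran P^\perp\subset V$ (since $P\underline c = 0$), the Sobolev term scales like $n^{-1}$ and vanishes in the limit while the boundary term is independent of $n$. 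Consequently, the form inequality (ii) for all $u\in V$ is equivalent to the boundary-only inequality $\Re\langle L P_C\underline c,\underline c - P_C\underline c\rangle \le 0$ for all $\underline c\in\Ran P^\perp$.

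The last step is to identify this condition with invariance of $C$ under the matrix semigroup $e^{t(A+B-\mathds{1})}=e^{tL}$: since $L=P^\perp L P^\perp$ annihilates $\Ran P$, the semigroup acts as the identity on $\Ran P$ and as $e^{tL|_{\Ran P^\perp}}$ on $\Ran P^\perp$, and the boundary inequality derived above is, given $P^\perp C\subset C$, exactly the Ouhabaz/Beurling--Deny characterization of invariance of $C$ by this matrix semigroup. The main obstacle I anticipate is precisely this last reduction: the Ouhabaz criterion from the PDE form naturally tests the boundary inequality only on vectors in $\Ran P^\perp$, so one has to use the projection hypothesis $P^\perp C\subset C$ in an essential way to transport this ``restricted'' dissipativity of $L$ into genuine invariance of $C$ under $e^{tL}$ on all of $\C^2$, exploiting that $L$ vanishes on the complementary subspace $\Ran P$.
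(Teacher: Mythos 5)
Your overall route -- Ouhabaz's criterion applied to the form $\delta_{P,L}$ of \eqref{eq:form} -- is the natural one and is, in essence, the route of the sources the paper cites for this statement ([CarMug09], [Mug\_book]); the paper itself offers no proof. Two of your three steps are sound: the equivalence $P_C(\Ran P^{\perp})\subset \Ran P^{\perp}\Leftrightarrow P^{\perp}C\subset C$ is proved correctly (firm nonexpansiveness plus the variational inequality), and the reduction of the form inequality to the boundary-only inequality $\Re\langle LP_C\underline{c},\underline{c}-P_C\underline{c}\rangle\le 0$ for $\underline{c}\in\Ran P^{\perp}$ works: necessity by your cut-off functions $\phi(\cdot/n)\underline{c}$, sufficiency immediately from the nonnegativity of the gradient term. (One technical caveat: $\delta_{P,L}$ is in general only quasi-accretive, while the two-sided Ouhabaz criterion is stated for accretive forms; since $0\in C$ only gives one implication under shifting, you should either invoke a quasi-accretive version or argue through $\lambda(\lambda-\Delta(A,B))^{-1}$ as in Lemma~\ref{lem:invar-b}.)

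The genuine gap is the final identification, which you flag yourself -- and it cannot be closed as stated. Given $P^{\perp}C\subset C$, your boundary inequality tests the subtangentiality (Nagumo) condition for $L$ only at points of $C\cap\Ran P^{\perp}$ and along normals lying in $\Ran P^{\perp}$; it is therefore equivalent to invariance of $C\cap\Ran P^{\perp}$ under $e^{tL}|_{\Ran P^{\perp}}$, equivalently of $C$ under $e^{tL}P^{\perp}$, and this is strictly weaker than invariance of $C$ under $e^{tL}$ on all of $\C^2$. Concretely, take $P$ the orthogonal projection onto the second coordinate, $L=\diag(-1,0)$ (so $A=\diag(-1,1)$, $B=\diag(1,0)$: an absorbing Robin condition on the first edge, Dirichlet on the second, fully decoupled) and $C=\{(x,y)\in\R^2\colon 0\le y\le x\le 1\}$. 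Then $P^{\perp}C\subset C$ and your boundary inequality holds, and indeed the semigroup leaves $\mathcal C$ invariant (the form is accretive here, and one can also check it directly: $0\le f_2\le f_1\le 1$ is preserved because the Dirichlet semigroup is dominated by the Robin one, which is sub-Markovian); yet $e^{tL}(1,1)=(e^{-t},1)\notin C$, so $e^{tL}$ does \emph{not} leave $C$ invariant. Hence no argument ``exploiting that $L$ vanishes on $\Ran P$'' can upgrade the restricted dissipativity to full invariance of $C$ under $e^{tL}$: what your (otherwise correct) computation characterizes is invariance of $C$ under $P^{\perp}$ together with invariance of $C\cap\Ran B$ under $e^{t(A+B-\mathds{1})}$, i.e.\ invariance of $C$ under $e^{t(A+B-\mathds{1})}P^{\perp}$. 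The same example shows this discrepancy is not an artifact of your method but a genuine difference between the two conditions, so before attempting to bridge it you should compare with the precise formulation of [CarMug09, Prop.~4.3]; the Proposition's second condition should be read in the weaker form just described rather than as invariance of $C$ under $e^{t(A+B-\mathds{1})}$ itself.
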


The power of our approach lies in the possibility of the explicit representation~\eqref{eq:res_infty} of the resolvent kernel. From this, some semigroup properties can be derived even for boundary conditions that are not m-sectorial,  when Ouhabaz' variational methods are not available.

\begin{lemma}\label{lem:invar-b}
Let the boundary conditions be regular and $\Delta(A,B)$ generate a contractive $C_0$-semigroup. Let $C$ be a closed convex subset of $\C^2$. Then the semigroup generated by $\Delta(A,B)$ leaves $\mathcal C$ invariant provided 
\begin{equation*}
\begin{split}
&
\frac{\kappa}{2}
\begin{bmatrix}
e^{-\kappa|x_1-y_1|} + e^{-\kappa (x_1+y_1)}\sigma_{11}(i\kappa) & e^{-\kappa (x_1+y_2)} \sigma_{12}(i\kappa)\\ 
e^{-\kappa (x_2+y_1)} \sigma_{21}(i\kappa) & e^{-\kappa|x_2-y_2|} + e^{-\kappa (x_1+y_2)} \sigma_{22}(i\kappa)
\end{bmatrix}
\end{split}
\end{equation*}
leaves $C$ invariant for all $\kappa>0$ and all $x_i,y_j\in [0,\infty)$, where $\sigma_{ij}(i\kappa)=(\mathfrak{S}(i\kappa;A,B))_{ij}, 1\leq i,j\leq 2$.
In particular, the semigroup generated by $\Delta(A,B)$ is $L^\infty$-contractive provided $\mathds{1}+\mathfrak{S}(i\kappa;A,B)$ leaves $\{\xi \in \C^2:|\xi_1|+| \xi_2|\le 1\}$ invariant for all $\kappa>0$.
\end{lemma}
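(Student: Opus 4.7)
The approach I would take is to pass from the semigroup to its resolvent and exploit the explicit kernel~\eqref{eq:res_infty}. For a contractive $C_0$-semigroup $(T(t))_{t\ge 0}$ with generator $\mathcal A$, a closed convex set $\mathcal C\subset L^2\oplus L^2$ is invariant under $T(t)$ for all $t\ge 0$ if and only if $\lambda R(\lambda,\mathcal A)$ leaves $\mathcal C$ invariant for all $\lambda>0$. I would invoke this classical equivalence: the nontrivial direction (from resolvent to semigroup invariance) follows from the exponential formula $T(t)f=\lim_n[(n/t)R(n/t,\mathcal A)]^n f$ in $L^2$ combined with the closedness of $\mathcal C$ under pointwise almost-everywhere convergent subsequences, while the converse follows from $\lambda R(\lambda,\mathcal A)f=\int_0^\infty\lambda e^{-\lambda t}T(t)f\,dt$ being a probability-weighted mixture.

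\textbf{Identification of the resolvent kernel.} Next I would substitute $k=i\kappa$ with $\kappa>0$ into~\eqref{eq:res_infty}: the prefactor $i/(2k)$ becomes $1/(2\kappa)$ and $e^{ik|x_j-y_j|}$ becomes $e^{-\kappa|x_j-y_j|}$. After multiplying by $\kappa^2$ the operator $\kappa^2R(\kappa^2,\Delta(A,B))$ is identified with the integral operator whose $2\times 2$ matrix kernel is exactly the one displayed in the lemma, where $\sigma_{ij}=\mathfrak S(i\kappa;A,B)_{ij}$. The task then reduces to showing that pointwise invariance of this matrix kernel under $C$ implies invariance of the integrated action $f\mapsto\int K(x,y;\kappa)f(y)\,dy$ on $\mathcal C$. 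I would accomplish this by approximating each component integral by Riemann sums $\sum_\ell K(x,y_\ell;\kappa)f(y_\ell)\Delta y_\ell$: each summand lies in $C$ by the hypothesis, and after the normalisation of total mass supplied by the contractivity of $\lambda R(\lambda,\mathcal A)$, these sums become convex combinations of points of $C$, whose limits remain in $C$ by closedness and convexity.

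\textbf{$L^\infty$-contractivity corollary and main obstacle.} For the final assertion I would take $C$ to be the closed $\ell^\infty$-unit ball of $\C^2$, so that $\mathcal C$ is the $L^\infty$-unit ball of $L^2\oplus L^2$. The binding case of the pointwise kernel-invariance condition is the corner $x_1=x_2=y_1=y_2=0$, where the kernel reduces to $(\kappa/2)(\mathds{1}+\mathfrak S(i\kappa;A,B))$ and where the decaying exponentials elsewhere only shrink the perturbative correction; the $\ell^1/\ell^\infty$ duality then converts the stated invariance hypothesis on $\mathds{1}+\mathfrak S(i\kappa;A,B)$ into the row-sum bound needed to make the resolvent kernel $L^\infty$-contractive. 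The main obstacle I anticipate lies in the passage from pointwise to integrated invariance of the kernel: because $C$ is in general not a cone, integrals of $C$-valued functions may fail to lie in $C$ without a proper renormalisation. Handling this step rigorously requires either a careful Riemann-sum analysis exploiting contractivity, or the equivalent detour through the Laplace representation of $\lambda R(\lambda,\mathcal A)$, which exhibits the resolvent directly as a convex mixture against the probability measure $\lambda e^{-\lambda t}\,dt$.
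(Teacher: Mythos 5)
Your route coincides with the paper's: reduce to the resolvent via the standard characterisation that a closed convex set is invariant under a contractive semigroup if and only if it is invariant under $\lambda(\lambda-\Delta(A,B))^{-1}$ for all $\lambda>0$ (the paper simply cites \cite[Prop.~2.3]{Ouh96}, your exponential-formula/Laplace-transform argument is the usual proof of that equivalence), then plug $k=i\kappa$, $\lambda=\kappa^2$ into \eqref{eq:res_infty} to identify $\kappa^2 r_{A,B}(x,y;i\kappa)$ with the matrix in the statement, and conclude by closedness of $\mathcal C$; the $L^\infty$-statement is likewise handled in the paper by dominating that matrix entrywise by $\mathds{1}+\mathfrak{S}(i\kappa;A,B)$, much as you do at the corner $x=y=0$.

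The one point where your write-up goes beyond the paper is the passage from pointwise invariance of the matrix kernel to invariance under the integrated operator, which you correctly single out as the delicate step (the paper compresses it into ``follows from closedness of $\mathcal C$''). However, the mechanism you invoke there does not work as stated: contractivity of $\lambda R(\lambda,\Delta(A,B))$ on $L^2$ gives no pointwise control on the kernel's mass, so it cannot turn your Riemann sums into convex combinations; and since $y$ runs over a set of infinite Lebesgue measure, an integral of $C$-valued terms need not lie in $C$ even when $0\in C$ (think of a $[0,1]$-valued integrand with integral larger than $1$). The normalisation has to come from the explicit kernel itself, not from contractivity: for instance, the scalar kernel $\tfrac{\kappa}{2}\bigl(e^{-\kappa|x-y|}+e^{-\kappa(x+y)}\bigr)$ has $y$-integral exactly $1$, so the resolvent kernel can be written as an average, against a probability measure, of convex combinations of $\mathds{1}$ and $\tfrac12\bigl(\mathds{1}+\mathfrak{S}(i\kappa;A,B)\bigr)$; alternatively, letting $x_i=y_j\to\infty$ in the hypothesis shows that all dilations $\tfrac{\kappa}{2}\mathds{1}$, $\kappa>0$, must leave $C$ invariant, so that $C$ is in effect a closed convex cone, for which integration of $C$-valued functions is harmless. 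Replacing the appeal to contractivity by such an argument would close the gap; with it, your proof is the paper's proof.
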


\begin{proof}
It is well-known that the semigroup leaves $\mathcal C$ invariant if and only if so does $\lambda (\lambda-\Delta(A,B))^{-1}$ for all $\lambda>0$, see~\cite[Prop.~2.3]{Ouh96}. By~\eqref{eq:res_infty}, $
\kappa^2 r_{A,B}(x,y;i\kappa)$ is the kernel of $\lambda (\lambda-\Delta(A,B))^{-1}$ for $\lambda=-(i\kappa)^2$; a direct computation shows that  for all $\mu\in \mathbb R$
\begin{equation}\label{eq:res_infty-bis}
\begin{split}
\mu^2 r_{A,B}(x,y;i\kappa)=
\frac{\mu^2}{2\kappa}
\begin{bmatrix}
e^{-\kappa|x_1-y_1|} + e^{-\kappa (x_1+y_1)}\sigma_{11}(i\kappa) & e^{-\kappa (x_1+y_2)} \sigma_{12}(i\kappa)\\ 
e^{-\kappa (x_2+y_1)} \sigma_{21}(i\kappa) & e^{-\kappa|x_2-y_2|} + e^{-\kappa (x_1+y_2)} \sigma_{22}(i\kappa)
\end{bmatrix}
\end{split}
\end{equation}
and the main claim now follows from closedness of $\mathcal C$, taking $\mu=\kappa$.

Finally, in order to prove the assertion about $L^\infty$-contractivity,  it suffices to observe that the matrix in~\eqref{eq:res_infty-bis} is in absolute value no larger than $\mathds{1}+\mathfrak{S}(i\kappa;A,B)$.
\end{proof}

If $\mathcal C$ is the positive cone, then the assertion can be sharpened as follows.

\begin{corollary}\label{cor:posit-equiv}
Let the boundary conditions be regular and $\Delta(A,B)$ generate a \emph{quasi}-contractive $C_0$-semigroup. Then the semigroup generated by $\Delta(A,B)$ is real if and only if $\mathfrak{S}(i\kappa;A,B)$ has real entries; in this case, the semigroup is additionally positive  whenever $\mathds{1}+\mathfrak{S}(i\kappa;A,B)$ leaves $\{\xi \in \R^2:\xi_1\ge 0 , \ \xi_2\ge 0\}$ invariant for some $\kappa_0$ and all $\kappa\ge \kappa_0$.
\end{corollary}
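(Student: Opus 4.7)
The plan is to translate both properties of the semigroup into the same properties of the resolvent $(\lambda-\Delta(A,B))^{-1}$ at large real $\lambda>0$, and then to read these off from the explicit kernel~\eqref{eq:res_infty-bis}. Quasi-contractivity guarantees $(\omega,\infty)\subset\rho(\Delta(A,B))$ for some $\omega$; combined with the Laplace representation $(\lambda-\Delta(A,B))^{-1}f=\int_0^\infty e^{-\lambda t}e^{t\Delta(A,B)}f\,dt$ and the Post--Widder exponential formula
\[
e^{t\Delta(A,B)}f=\lim_{n\to\infty}\Big(\tfrac{n}{t}\Big)^{\!n}\!\Big(\tfrac{n}{t}-\Delta(A,B)\Big)^{-n}f,
\]
this gives that the semigroup is real (respectively positive) if and only if $(\lambda-\Delta(A,B))^{-1}$ has the corresponding property for every sufficiently large real $\lambda$.

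Writing $\lambda=\kappa^2$, this resolvent has kernel $r_{A,B}(\cdot,\cdot;i\kappa)$. For the realness equivalence, if the kernel is real I would evaluate the off-diagonal entries at $(x,y)=(0,0)$ to force $\sigma_{12}(i\kappa),\sigma_{21}(i\kappa)\in\R$, and the $(i,i)$-entry at $x_i=y_i=0$ to force $1+\sigma_{ii}(i\kappa)\in\R$; the converse is immediate by inspection of~\eqref{eq:res_infty-bis}.

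For the positivity implication, invariance of the positive cone of $\R^2$ under $\mathds{1}+\mathfrak{S}(i\kappa;A,B)$ is equivalent to non-negativity of all entries of that (real) matrix, i.e.\ $\sigma_{12}(i\kappa),\sigma_{21}(i\kappa)\ge 0$ and $\sigma_{ii}(i\kappa)\ge -1$ for $i=1,2$. The off-diagonal entries of the kernel are then non-negative by inspection; for the diagonal I would combine $|x_i-y_i|\le x_i+y_i$ (valid for $x_i,y_i\ge 0$) with $\sigma_{ii}(i\kappa)\ge -1$ to bound
\[
e^{-\kappa|x_i-y_i|}+e^{-\kappa(x_i+y_i)}\sigma_{ii}(i\kappa)\ge e^{-\kappa(x_i+y_i)}\bigl(1+\sigma_{ii}(i\kappa)\bigr)\ge 0,
\]
with the case $\sigma_{ii}(i\kappa)\ge 0$ being trivial. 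Hence $r_{A,B}(\cdot,\cdot;i\kappa)\ge 0$ for every $\kappa\ge\kappa_0$, the resolvent is positive along a ray, and the equivalence from the first paragraph concludes.

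The main conceptual obstacle is that Lemma~\ref{lem:invar-b} was stated under contractivity, whereas here only quasi-contractivity is assumed; the Post--Widder route sidesteps this by working directly with the resolvent and never invoking the norm. A secondary subtlety is that the resolvent-level characterization only controls $\mathfrak{S}(i\kappa;A,B)$ for $\kappa$ beyond some threshold, but since the Cayley transform is meromorphic, realness on any ray of the positive imaginary axis extends to the whole domain of analyticity by the identity theorem, so nothing is lost in passing from ``large $\kappa$'' to the form in which the statement is phrased.
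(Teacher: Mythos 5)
Your proposal is correct and takes essentially the same approach as the paper: both reduce reality and positivity of the semigroup to the corresponding property of the resolvent kernel \eqref{eq:res_infty-bis} at $\lambda=\kappa^2$ large, and the decisive step is the identical entrywise estimate $e^{-\kappa|x_i-y_i|}+e^{-\kappa(x_i+y_i)}\sigma_{ii}(i\kappa)\ge e^{-\kappa(x_i+y_i)}\bigl(1+\sigma_{ii}(i\kappa)\bigr)\ge 0$ together with $\sigma_{12}(i\kappa),\sigma_{21}(i\kappa)\ge 0$. The only divergence is routine bookkeeping: you pass between semigroup and resolvent via the Laplace representation and the Post--Widder formula, while the paper rescales away quasi-contractivity and cites a kernel-level characterization of reality and positivity, which does not change the substance of the argument.
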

\begin{proof}
Positivity and reality of a semigroup is unaffected by scalar (real) perturbations of its generator. Furthermore, reality (resp., positivity) of a positive operator is equivalent to reality (resp., positivity) of its kernel, cf.~\cite[Thm.~5.2]{MugNit11}. Finally,~\eqref{eq:res_infty-bis} shows that the entries of the resolvent's kernel at $i\kappa$ are real if and only if so are the entries of $\mathfrak{S}(i\kappa;A,B)$. The proof of the positivity follows essentially the proof of \cite[Thm.~4.6]{KS2006} although there only self-adjoint boundary conditions are considered:  we only need to observe that if $\mathds{1}+\mathfrak{S}(i\kappa;A,B)$ has real and positive entries, then for $\mu=\kappa$ the matrix in~\eqref{eq:res_infty-bis} is entry-wise no smaller than
\begin{equation}\label{eq:res_infty-ter}
\frac{\kappa}{2}
\begin{bmatrix}
e^{-\kappa (x_1+y_1)}\big(1+\sigma_{11}(i\kappa)\big) & e^{-\kappa (x_1+y_2)} \sigma_{12}(i\kappa)\\ 
e^{-\kappa (x_2+y_1)} \sigma_{21}(i\kappa) & e^{-\kappa (x_2+y_2)} \big(1+\sigma_{22}(i\kappa)\big)
\end{bmatrix},
\end{equation}
whence the claim follows.
\end{proof}

\begin{example}\label{ex:sec-revis}
By Proposition~\ref{prop:invar-m-sect}, the boundary conditions in Example~\ref{ex:sec} define a semigroup that leaves invariant $\mathcal C$ if and only if the semigroup generated by $A$ leaves $C$ invariant: e.g.,\ the semigroup generated by $\Delta(A,B)$ is positive if and only if $A_{11}$ is real and $A_{21}\ge 0$. That this is a sufficient condition can be deduced from Corollary~\ref{cor:posit-equiv}, too.

Furthermore, the semigroup is $L^\infty$-contractive (and in this case automatically $L^p$-contractive for all $p\in [1,\infty]$) if and only if $\Re A_{11}\le 0$ and  $A_{12}=0$, cf.~\cite[Lemma~6.1]{Mug07}. (Observe that the latter condition induces a decoupling of our system, as we are left with two Laplacians on $[0,\infty)$ with Neumann and Robin boundary conditions, respectively.) 
\end{example}

\begin{remark}\label{rem:hks}
	One criterion for contractivity of the semigroup is that $A,B$ are m-sectorial with $\Re L \leq 0$, see \cite[Thm. 2.4]{KSP2008}, or equivalently $\Re AB^* \leq 0$, but as we know the case of m-sectorial boundary conditions can be treated more directly by Proposition~\ref{prop:invar-m-sect}, without invoking Lemma~\ref{lem:invar-b}. Example~\ref{ex:new} shows that non-(quasi-)contractive semigroups can actually arise; we note in passing that in the setting of Example~\ref{ex:new}
	\begin{equation}\label{eq:hks}
\mathfrak{S}(i\kappa;A_\tau,B_\tau)=\frac{1}{\cos \tau}\begin{bmatrix}
i\sin\tau & 1\\1 & -i\sin\tau
\end{bmatrix},\qquad \kappa>0,
	\end{equation}
	(see~\cite[Example~3.5]{HKS2015}), i.e., $\mathds{1}+\mathfrak{S}(i\kappa;A,B)$ does not leave either the positive cone of $\R^2$ or the unit ball of $\ell^\infty\times \ell^\infty$ invariant.
\end{remark}

An interesting consequence of Theorem~\ref{thm:generator}.(a) is that if there is  a simple pole $k_0$ of $k\mapsto (A+ikB)^{-1}$ with $\Im k_0>|\Re k_0|$, then the peripheral spectrum of $\Delta(A,B)$ is finite and consists of simple poles of the resolvent. This paves the way to study semigroups that are merely \textit{asymptotically} positive; i.e., those semigroups whose orbits starting at positive initial data tend to the lattice's positive cone, see~\cite[Def.~8.1]{DGK16_b}.

\begin{proposition}\label{prop:asympt}
{
Let $\Delta(A,B)$ generate a $C_0$-semigroup. Assume the zero $k$ of $\{k:\Im k>0\}\ni k\mapsto \det (A+ikB)\in \C$ of larger magnitude lies on $i(0,\infty)$, i.e., $k=i\kappa_0$ for some $\kappa_0>0$, and let $A\ne \kappa_0 B$. Consider the following assertions:}
\begin{enumerate}[(i)]
\item the semigroup generated by $\Delta(A,B)$ is asymptotically positive;
 \item the spectral projection of $\Delta(A,B)$ associated with $\kappa_0^2$ is positive;
  \item the distance to the set $[0,\infty)$ of each entry of $(\kappa-\kappa_0)^2\mathfrak{S}(i\kappa;A,B)$ tends to $0$ as $\kappa\searrow \kappa_0$;
  \item $\lim\limits_{\kappa\searrow \kappa_0}\frac{(\kappa-\kappa_0)^2}{\det(A-\kappa B)}=0$.
\end{enumerate}
Then $(i)\Leftrightarrow (ii)\Leftarrow (iii) \Leftarrow (iv)$. 
\end{proposition}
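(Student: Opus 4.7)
I would translate all three implications into statements about the Laurent expansion of $\mathfrak{S}(i\kappa;A,B)$ at the point $\kappa_0$, exploiting the explicit resolvent kernel~\eqref{eq:res_infty}. Since $\det(A+ikB)$ is a polynomial in $k$ of degree at most two, $\mathfrak{S}$ has a pole of order one or two at $\kappa_0$, and the hypothesis $A\ne\kappa_0 B$ (compare Remark~\ref{rem:mult}) ensures that the geometric multiplicity of $\kappa_0^2$ as an eigenvalue of $\Delta(A,B)$ equals one.

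For (i)$\Leftrightarrow$(ii) I would first verify that $\kappa_0^2$ is a strictly dominant, isolated spectral value of $\Delta(A,B)$. By Theorem~\ref{thm:generator}(a) the essential spectrum of $\Delta(A,B)$ lies in $(-\infty,0]$, and the only other possible eigenvalue $-k_2^2$, with $\Im k_2>0$ and $|k_2|\le\kappa_0$, satisfies
\[
\Re(-k_2^2)=(\Im k_2)^2-(\Re k_2)^2\le|k_2|^2\le\kappa_0^2,
\]
with equality forcing $k_2=i\kappa_0=k_1$. The equivalence (i)$\Leftrightarrow$(ii) is then precisely the Daners--Gl\"uck--Kennedy characterization of (individual) asymptotic positivity as positivity of the spectral projection at the peripheral spectrum~\cite{DGK16_b}. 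For (iv)$\Rightarrow$(iii), Cramer's rule gives
\[
(\kappa-\kappa_0)^2\mathfrak{S}(i\kappa;A,B)=-\frac{(\kappa-\kappa_0)^2}{\det(A-\kappa B)}\,\mathrm{adj}(A-\kappa B)(A+\kappa B),
\]
and hypothesis (iv) kills the scalar factor while leaving the matrix factor bounded near $\kappa_0$, so every entry tends to $0\in[0,\infty)$.

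The core step is (iii)$\Rightarrow$(ii). Writing $P$ as the Dunford--Riesz integral $P=\frac{1}{2\pi i}\oint R(\lambda,\Delta(A,B))\,d\lambda$ around $\kappa_0^2$ and changing variable $\lambda=\kappa^2$, $d\lambda=2\kappa\,d\kappa$, the ``free'' diagonal summand in~\eqref{eq:res_infty} is analytic in $\kappa$ near $\kappa_0$ and drops out, so
\[
P(x,y)=\operatorname{Res}_{\kappa=\kappa_0}\bigl[E(x,\kappa)\,\mathfrak{S}(i\kappa;A,B)\,E(y,\kappa)\bigr],\quad E(x,\kappa):=\diag(e^{-\kappa x_1},e^{-\kappa x_2}).
\]
Pairing the Taylor expansions of $E(x,\kappa),E(y,\kappa)$ at $\kappa_0$ against the Laurent expansion of $\mathfrak{S}$ there exhibits $P(x,y)$ as a linear combination of the relevant Laurent coefficients of $\mathfrak{S}(i\kappa;A,B)$, with coefficients that are non-negative explicit expressions in the strictly positive exponentials $e^{-\kappa_0 x_i},e^{-\kappa_0 y_j}$ and in $x_i,y_j\ge0$. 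Condition~(iii), combined with the closedness of $[0,\infty)\subset\mathbb{C}$, forces the Laurent coefficient controlled by~(iii) to have entries in $[0,\infty)$; hence the matrix kernel $P(x,y)$ is entrywise non-negative and $P$ is positive as an integral operator on $L^2([0,\infty))\oplus L^2([0,\infty))$.

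\textbf{Main obstacle.} The delicate point in (iii)$\Rightarrow$(ii) is that the precise combination of Laurent coefficients appearing in $P(x,y)$ depends on whether the pole of $\mathfrak{S}$ at $\kappa_0$ is simple or double: in the simple-pole case $(\kappa-\kappa_0)^2\mathfrak{S}\to 0$ and (iii) is automatic, while in the double-pole case (iii) is a genuine entrywise sign restriction on the leading Laurent coefficient. One must check that in each case the Laurent coefficient pinned down by~(iii) is indeed the one that enters $P(x,y)$, and that the additional derivative-type contributions coming from Taylor-expanding $E(x,\kappa)E(y,\kappa)$ combine with the correct sign to preserve non-negativity; the one-sidedness $\kappa\searrow\kappa_0$ in~(iii) and the hypothesis $A\ne\kappa_0 B$, which rules out geometric multiplicity two, are both essential at this point.
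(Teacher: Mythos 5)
Your treatment of (i)$\Leftrightarrow$(ii) (dominance of $\kappa_0^2$ plus the Daners--Gl\"uck--Kennedy characterization) and of (iv)$\Rightarrow$(iii) (Cramer's rule, $\mathfrak{S}(i\kappa;A,B)=-\det(A-\kappa B)^{-1}\operatorname{adj}(A-\kappa B)(A+\kappa B)$ with a bounded matrix factor) coincides in substance with the paper's argument. The genuine gap is in your core step (iii)$\Rightarrow$(ii). Condition (iii) only constrains $\lim_{\kappa\searrow\kappa_0}(\kappa-\kappa_0)^2\mathfrak{S}(i\kappa;A,B)$, i.e.\ the order-two Laurent coefficient $S_{-2}$ of $\mathfrak{S}$ at $\kappa_0$; it says nothing about the order-one coefficient $S_{-1}$, and it is vacuous when the pole is simple. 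But the residue you propose to compute, $P(x,y)=\operatorname{Res}_{\kappa=\kappa_0}[E(x,\kappa)\mathfrak{S}(i\kappa;A,B)E(y,\kappa)]$ (up to the Jacobian $2\kappa$ and the free term), involves $S_{-1}$ as well as $S_{-2}$ paired with first-order Taylor terms of the exponentials; moreover your claim that these Taylor coefficients are non-negative is false, since $\partial_\kappa e^{-\kappa x_i}=-x_i e^{-\kappa x_i}\le 0$, so the derivative contributions enter with negative signs. Hence entrywise non-negativity of $P(x,y)$ cannot be extracted from (iii) by this bookkeeping — your ``main obstacle'' paragraph is precisely the unproved step, and in the simple-pole case your route would even require proving (ii) unconditionally, which the residue formula does not give.

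The paper circumvents this by never computing the spectral projection kernel. It shows that (iii) forces the distance to the positive cone of the rescaled resolvent kernel in \eqref{eq:fin} (equivalently \eqref{eq:res_infty-bis} with $\mu=\kappa-\kappa_0$) to vanish as $\kappa\searrow\kappa_0$, using only the elementary cone inequality $\dist(a+\delta b,K)\le\dist(b,K)$ for $a\in K$ and $\delta\in(0,1]$: here $a$ is the positive free heat-kernel term $e^{-\kappa|x_i-y_i|}$ and $\delta$ the factors $e^{-\kappa(x_i+y_j)}\in(0,1]$ multiplying the entries of $(\kappa-\kappa_0)^2\mathfrak{S}(i\kappa;A,B)$. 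This yields asymptotic positivity of the resolvent at $\lambda_0=\kappa_0^2$ in the sense of \cite{DGK16_b}, which by \cite[Thm.~7.6]{DGK16_b} is equivalent to (ii) and by \cite[Thm.~1.2]{DGK16_b} to (i); no control of $S_{-1}$ or of the derivative terms is ever needed, because only the asymptotic behaviour of the scaled resolvent, not the residue itself, enters. You should either adopt this route or supply the missing information on the sub-leading Laurent coefficient, which hypothesis (iii) alone does not provide.
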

We refer to~\cite[\S~IV.1]{EngelNagel} for the definition of spectral projections of possibly non-selfadjoint operators.
\begin{proof}
By Theorem~\ref{thm:generator} and Remark~\ref{rem:mult}, the main assumptions imply that the peripheral spectrum contains precisely one eigenvalue, which is simple: hence the spectral bound is a dominant spectral value. We are thus in the position to apply~\cite[Thm.~1.2]{DGK16_b}: in view of~\eqref{eq:res_infty-bis} we conclude that asymptotic positivity (in the sense of~\cite[Def.~7.2]{DGK16_b}) of  $\lambda\mapsto (\lambda-\Delta(A,B))^{-1}$ at $\lambda_0:=-(i\kappa_0)^2=\kappa_0^2$ (which is in turn equivalent to (ii), by~\cite[Thm.~7.6]{DGK16_b}) is equivalent to the condition that the distance to $[0,\infty)$ of each entry of 
\begin{equation}\label{eq:fin}
\begin{split}
\frac{(\kappa-\kappa_0)^2}{2\kappa}
\begin{bmatrix}
e^{-\kappa|x_1-y_1|} + e^{-\kappa (x_1+y_1)}\sigma_{11}(i\kappa) & e^{-\kappa (x_1+y_2)} \sigma_{12}(i\kappa)\\ 
e^{-\kappa (x_2+y_1)} \sigma_{21}(i\kappa) & e^{-\kappa|x_2-y_2|} + e^{-\kappa (x_1+y_2)} \sigma_{22}(i\kappa)
\end{bmatrix}
%
\end{split}
\end{equation}
 tends to 0 as $\kappa\searrow \kappa_0$ for each $x_1,x_2,y_1,y_2\in [0,\infty)$. 
Now, observe that if $K$ is a cone in a lattice $X$, $\delta\in (0,1]$, and $a\in K$, then for any $b\in X$ $\dist(b,K)\ge \dist(a+\delta b,K)$; we conclude that the distance to the positive cone $\C^2_+$ of the matrix in~\eqref{eq:fin} is no larger than the distance to the same cone of the matrix $(\kappa-\kappa_0)^2\mathfrak{S}(i\kappa;A,B)$, which proves that $(i)$ is implied by $(iii)$. To conclude the proof, it suffices to observe that the poles of $k\mapsto \mathfrak{S}(i\kappa;A,B)$ are the zeros of $k\mapsto \det(A+i\kappa B)$, with equal multiplicity.
%
\end{proof}

\begin{example}\label{exa:asympt}
Let us come back to the setting in Example~\ref{ex:sec}. We have already seen that if $A_{11}>0$, then $A_{11}^2$ is a dominant eigenvalue of $\Delta(A,B)$ and we can hence apply Proposition~\ref{prop:asympt} to~\eqref{eq:cayley_msec-exa} and conclude that the semigroup generated by $\Delta(A,B)$ is asymptotically positive for any $A_{11},A_{12}$.
\end{example}

{
\begin{example}\label{exa:hyperb}

Consider
\[
A=\begin{bmatrix}
0 & -1\\ -1 & 0
\end{bmatrix}\quad\hbox{and }\quad B=\begin{bmatrix}
1 & 0 \\ 0 & 1
\end{bmatrix}.
\]
Because $\det(A+ikB)= -k^2-1$, we obtain that $1=-(i\kappa_0)^2$ with $\kappa_0=1$ is the only eigenvalue of $\Delta(A,B)$; and it is simple, since $A\ne B$.
Accordingly, by Proposition~\ref{prop:invar-m-sect} the semigroup generated by $\Delta(A,B)$ is not positive (nor it is $\ell^\infty$-contractive). Let us sharpen this assertion: 
\[
\frac{(\kappa-\kappa_0)^2}{\det(A-\kappa B)}=\frac{(\kappa-1)^2}{\kappa^2-1}=\frac{\kappa-1}{\kappa+1}\stackrel{\kappa\searrow 1}{\longrightarrow}0,
\]
and we conclude from Proposition~\ref{prop:asympt} that the semigroup generated by $\Delta(A,B)$ is asymptotically positive.
\end{example}
}

\bibliographystyle{alpha}
\bibliography{literatur_SOTA2018}


\end{document}